\theoremstyle{plain}
\date{\today}
\title{The multifractal box dimensions of typical measures}
\author{Fr\'ed\'eric Bayart}
\address{
Clermont Universit\'e, Universit\'e Blaise Pascal, Laboratoire de Math\'ematiques, BP 10448, F-63000 CLERMONT-FERRAND -
CNRS, UMR 6620, Laboratoire de Math\'ematiques, F-63177 AUBIERE
}
\email{Frederic.Bayart@math.univ-bpclermont.fr}
\subjclass{}
\keywords{}
\newcommand{\veps}{\varepsilon}
\def\dinfsmall{\underline{\dim}_{*,B}}
\def\dinfbig{\underline{\dim}_B^*}
\def\dinfsmallmu{\dinfsmall(\mu)}
\def\dinfbigmu{\dinfbig(\mu)}
\def\dsupsmall{\overline{\dim}_{*,B}}
\def\dsupbig{\overline{\dim}_B^*}
\def\dsupsmallmu{\dsupsmall(\mu)}
\def\dsupbigmu{\dsupbig(\mu)}
\def\dsuplocmax{\overline{\dim}_{B,\rm loc,max}}
\def\npiq{\mathbf N_\pi^q}
\def\dsuppiq{\overline{\dim}_{\pi, B}^q}
\def\dsuppilocq{\overline{\dim}_{\pi, B,\rm loc}^q}
\def\dinfpiq{\underline{\dim}_{\pi,B}^q}
\def\dinfbigpiq{\underline{\dim}^{*,q}_{\pi, B}}
\def\dinfsmallpiq{\underline{\dim}^{q}_{*,\pi, B}}
\def\dsupbigpiq{\overline{\dim}^{*,q}_{\pi, B}}
\def\dsupsmallpiq{\overline{\dim}^{q}_{*,\pi, B}}
\def\taupiloc{\tau_{\pi,\rm loc}}
\def\taupi{\tau_\pi}
\def\Dinfpi{\underline{D}_\pi}
\def\Dsuppi{\overline{D}_\pi}
\def\Dsuppimax{\overline{D}_{\pi,\rm max}}
\def\Dinfpimin{\underline{D}_{\pi,\rm min}}
\def\taupilocmax{\tau_{\pi,\rm loc,max}}
\def\Dinfpiunif{\underline{D}_{\pi,\rm unif}}
\def\Dsuppiunif{\overline{D}_{\pi,\rm unif}}
\def\Dinfpiunifmin{\underline{D}_{\pi,\rm unif, min}}
\def\Dsuppiunifmax{\overline{D}_{\pi,\rm unif, max}}
\def\ppiq{\mathbf P_\pi^q}
\def\pk{\mathcal P(K)}
\def\dboxinf{\underline{\dim}_B}
\def\dboxsup{\overline{\dim}_B}
\def\dboxsuploc{\overline{\dim}_{B,{\rm loc}}}
\DeclareMathOperator{\supp}{supp}
\newtheorem{theorem}{Theorem}[section]
\newtheorem{lemma}[theorem]{Lemma}
\newtheorem{proposition}[theorem]{Proposition}
\newtheorem{corollary}[theorem]{Corollary}
\theoremstyle{definition}}
\theoremstyle{definition}}
\theoremstyle{definition}}
\theoremstyle{definition}}
\theoremstyle{definition}}
\theoremstyle{definition}}
\newtheorem*{OLSEN}{Theorem B (Olsen)}
\newtheorem*{MYJAK}{Theorem A (Myjak and Rudnicki)}
\begin{document}

\begin{abstract}
We compute the typical (in the sense of Baire's category theorem) multifractal box dimensions of measures on a compact subset of $\mathbb R^d$. Our results are new even in the context
of box dimensions of measures.
\end{abstract}

\maketitle

\section{Introduction}
\subsection{Position of the problem}
The origin of this paper goes back to the work \cite{MR02} of J. Myjak and R. Rudnicki, where they investigate the box dimensions of typical measures.
To state their result, we need to introduce some terminology. Let $K$ be a compact subset of $\mathbb R^d$, and let $\mathcal P(K)$ be the set of Borel probability measures on $K$; we endow $\mathcal P(K)$ with the weak topology.
By a property true for a typical measure
of $\pk$, we mean a property which is satisfied by a dense $G_\delta$ set of elements of $\pk$.

For a subset $E\subset\mathbb R^d$, we denote the lower box dimension of $E$ and the upper box dimension of $E$
by $\dboxinf(E)$ and $\dboxsup(E)$, respectively. Also, for a probability measure $\mu$, we define the \emph{small} and \emph{big lower (resp. upper) multifractal box dimensions} of $\mu$ by

$$\begin{array}{rclcrcl}
\displaystyle \dinfsmallmu&=&\inf_{\mu(E)>0}\dboxinf(E)&&\displaystyle \dinfbigmu&=&\lim_{\veps>0}\inf_{\mu(E)>1-\veps}\dboxinf(E)\\
\displaystyle \dsupsmallmu&=&\inf_{\mu(E)>0}\dboxsup(E)&&\displaystyle \dsupbigmu&=&\lim_{\veps>0}\inf_{\mu(E)>1-\veps}\dboxsup(E).
\end{array}$$

Finally, we define the \emph{local upper box dimension of $K$} by
$$\dboxsuploc(K)=\inf_{x\in K}\inf_{r>0}\dboxsup\big(K\cap B(x,r)\big).$$

\begin{MYJAK}
Let $K$ be a compact subset of $\mathbb R^d$. Then a typical measure $\mu\in\mathcal P(K)$ satisfies
$$\dinfsmallmu=\dinfbigmu=0$$
$$\dboxsuploc(K)\leq \dsupsmallmu\leq \dsupbigmu\leq \dboxsup(K).$$
\end{MYJAK}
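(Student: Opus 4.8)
The plan is to show that the set of measures satisfying the conclusion contains the intersection of two dense $G_\delta$ subsets of $\mathcal P(K)$; this suffices since $\mathcal P(K)$, being compact and metrizable, is a Baire space. First I would record the inequalities that hold for \emph{every} $\mu\in\mathcal P(K)$: since $\{E:\mu(E)>1-\veps\}\subseteq\{E:\mu(E)>0\}$ for $\veps<1$, one has $\dinfsmallmu\le\dinfbigmu$ and $\dsupsmallmu\le\dsupbigmu$; testing with $E=K$ gives $\dsupbigmu\le\dboxsup(K)$; and $\dinfsmallmu\ge 0$ is trivial. Hence it remains to produce one dense $G_\delta$ on which $\dinfbigmu=0$ and one on which $\dsupsmallmu\ge s$, where I write $s:=\dboxsuploc(K)$ (when $s=0$ the second assertion is automatic).

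For the first, given $n\in\NN$ let $\mathcal G_n$ be the set of $\mu$ for which there are $\delta\in(0,1/n)$ and a union $U$ of at most $\lfloor\delta^{-1/n}\rfloor$ open balls of radius $\delta$ with $\mu(U)>1-2^{-n}$. Since $\mu\mapsto\mu(U)$ is lower semicontinuous for $U$ open, $\mathcal G_n$ is open; it is dense because each finitely supported $\sum_i a_i\delta_{x_i}$ lies in it (take $U=\bigcup_i B(x_i,\delta)$ with $\delta$ small enough that $\delta<1/n$ and the number of atoms is at most $\lfloor\delta^{-1/n}\rfloor$) and such measures are weakly dense. If $\mu\in\bigcap_n\mathcal G_n$ and $\veps>0$, pick $N$ with $\sum_{n\ge N}2^{-n}<\veps$ and set $E=\bigcap_{n\ge N}U_n$, where $U_n$ witnesses $\mu\in\mathcal G_n$; then $\mu(E)>1-\veps$, while $\overline E\subseteq\overline{U_n}$ is covered by $\lfloor\delta_n^{-1/n}\rfloor$ balls of radius $\delta_n\to 0$, so $\dboxinf(E)=\dboxinf(\overline E)\le\liminf_n 1/n=0$. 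Thus $\dinfbigmu=0$ on the dense $G_\delta$ set $\bigcap_n\mathcal G_n$.

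For the second, write $N_\delta(\cdot)$ for the least number of closed balls of radius $\delta$ covering a bounded set. For $t\in\QQ\cap[0,s)$ and $m\in\NN$, let $\mathcal C_{t,m}$ be the family of nonempty compact $C\subseteq K$ with $N_\delta(C)\le\delta^{-t}$ for every $\delta<1/m$. A cover of a compact set by balls of radius $\delta$ is a cover by balls of radius $\delta'>\delta$ of anything within Hausdorff distance $\delta'-\delta$, so $\mathcal C_{t,m}$ is closed, hence compact, in the space of compact subsets of $K$ for the Hausdorff metric. Put $\Phi_{t,m}(\mu):=\sup_{C\in\mathcal C_{t,m}}\mu(C)$. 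The key point is that $(C,\mu)\mapsto\mu(C)$ is upper semicontinuous on $\mathcal C_{t,m}\times\mathcal P(K)$: if $C_j\to C$ (Hausdorff) and $\mu_j\to\mu$ (weakly), then $\limsup_j\mu_j(C_j)\le\mu(C)$, by applying the portmanteau theorem to a small closed neighbourhood of $C$ and using outer regularity of $\mu$. As $\mathcal C_{t,m}$ is compact, $\Phi_{t,m}$ is then upper semicontinuous, so $\{\Phi_{t,m}=0\}=\bigcap_p\{\Phi_{t,m}<1/p\}$ is a $G_\delta$ set; let $\mathcal R:=\bigcap_{t,m}\{\Phi_{t,m}=0\}$. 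For $\mu\in\mathcal R$ and a Borel set $E$ with $\dboxsup(E)<s$, choose rational $t$ with $\dboxsup(E)<t<s$ and then $m$ with $N_\delta(E)<\delta^{-t}$ for $\delta<1/m$; every compact $C\subseteq E\cap K$ lies in $\mathcal C_{t,m}$, so $\mu(C)=0$, whence $\mu(E)=\mu(E\cap K)=0$ by inner regularity. Therefore $\dsupsmallmu\ge s$ on $\mathcal R$.

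The remaining, and main, difficulty is to show that each $\{\Phi_{t,m}<1/p\}$ is dense — that is, that near any $\mu$ there is a measure assigning mass $<1/p$ to \emph{every} member of $\mathcal C_{t,m}$ simultaneously. Here I would start from a finitely supported $\nu_0=\sum_{i=1}^\ell a_i\delta_{x_i}$ close to $\mu$, fix a small $r>0$, and replace each $\delta_{x_i}$ by the uniform probability measure $\mu^{(i)}$ on a maximal $\delta^{(i)}$-separated subset $P^{(i)}$ of $K\cap\overline B(x_i,r)$; for $r$ small the measure $\mu':=\sum_i a_i\mu^{(i)}$, being supported within distance $r$ of $\{x_i\}$ with the prescribed masses, remains in the chosen neighbourhood. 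Fixing $t'\in(t,s)$, the equality $s=\dboxsuploc(K)$ ensures $\dboxsup(K\cap\overline B(x_i,r))\ge s$, so $\delta^{(i)}$ may be chosen as small as we please with $|P^{(i)}|\ge N_{\delta^{(i)}}(K\cap\overline B(x_i,r))\ge(\delta^{(i)})^{-t'}$. Then, for any $C\in\mathcal C_{t,m}$ and $\delta^{(i)}<1/m$, a $\delta^{(i)}$-separated subset of $C$ has at most $c_d\,N_{\delta^{(i)}}(C)\le c_d(\delta^{(i)})^{-t}$ points (a packing bound, with $c_d$ depending only on $d$), so $\mu^{(i)}(C)=|P^{(i)}\cap C|/|P^{(i)}|\le c_d(\delta^{(i)})^{t'-t}<1/p$ for $\delta^{(i)}$ small; summing over $i$ gives $\mu'(C)<1/p$ for all $C\in\mathcal C_{t,m}$, i.e. $\Phi_{t,m}(\mu')<1/p$. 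This establishes density. Since $\mathcal P(K)$ is Baire, $\big(\bigcap_n\mathcal G_n\big)\cap\mathcal R$ is a dense $G_\delta$ on which all the relations of the statement hold.
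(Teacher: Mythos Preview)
Your argument is correct. Note first that the paper does not itself prove Theorem~A --- it is quoted from \cite{MR02} and recovered as the case $q=0$ of Olsen's Theorem~B --- so the relevant comparison is with the paper's proofs of its own sharper Theorems~\ref{THMMAINBIG} and~\ref{THMMAINSMALL}. Your treatment of $\dinfbigmu=0$ is essentially the $q=0$ specialization of Section~4.2: approximate by finitely supported measures and surround the atoms by tiny balls to obtain sets of mass close to $1$ admitting very efficient covers along a subsequence of scales. For the lower bound $\dsupsmallmu\ge\dboxsuploc(K)$, however, your route is genuinely different from Section~3.2. There the argument is constructive: for each $x\in K$ and small $s>0$ one builds a measure $\mu_{x,s}$ supported on a large $r_{x,s}$-packing of $K\cap B(x,s)$, forms convex combinations $\mu_{A,\mathbf p,s}$, and shows that any $\mu$ lying in infinitely many of the associated $L$-balls gives positive mass only to sets that themselves admit large packings. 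You instead parametrize the \emph{obstructions}: the compacta with $t$-small covering growth form a Hausdorff-compact family $\mathcal C_{t,m}$, the map $(C,\mu)\mapsto\mu(C)$ is jointly upper semicontinuous, hence so is $\Phi_{t,m}$, and density is obtained by spreading each Dirac mass uniformly over a maximal separated net in a small ball. Your approach yields a clean $G_\delta$ description via semicontinuity and avoids explicit packing bookkeeping; the paper's constructive approach, by contrast, is what extends to general $q$ and a reference measure $\pi$, whereas the hyperspace formulation has no obvious analogue once $\pi\big(B(\cdot,r)\big)^q$ replaces the covering number.
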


The result concerning the upper multifractal box dimension does not solve completely the problem for compact sets as simple as $K=\{0\}\cup[1,2]$. In this
case, we just obtain that, typically
$$0\leq \dsupsmallmu\leq\dsupbigmu\leq 1.$$
In particular, we do not know whether the interval $[0,1]$ is the shortest possible, or whether $\dsupsmallmu$ and $\dsupbigmu$ coincide for a typical measure.

\smallskip

Our aim, when we began this work, was to solve this question. To answer it, we need to introduce the maximal local upper box dimension of a set $E$. It is defined
by 
$$\dsuplocmax(E)=\sup_{y\in E,\rho>0}\dboxsuploc\big(E\cap B(y,\rho)\big).$$
Our first main result now reads:
\begin{theorem}\label{THMMRMIEUX}
Let $K$ be a compact subset of $\mathbb R^d$. Then a typical measure $\mu\in\mathcal P(K)$ satisfies
\begin{eqnarray*}
\dsupsmallmu&=&\dboxsuploc(K)\\
\dsupbigmu&=&\dsuplocmax(K).\\
\end{eqnarray*}
\end{theorem}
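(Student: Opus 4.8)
The plan is to build a single dense $G_\delta$ subset of $\pk$ on which both equalities hold, obtained by intersecting countably many dense $G_\delta$ sets corresponding to the four inequalities $\dsupsmallmu\ge\dboxsuploc(K)$, $\dsupsmallmu\le\dboxsuploc(K)$, $\dsupbigmu\le\dsuplocmax(K)$, $\dsupbigmu\ge\dsuplocmax(K)$. Throughout I use that $\pk$ is compact metrizable, hence Baire, that $\mu\mapsto\mu(U)$ is lower (resp.\ upper) semicontinuous for $U$ open (resp.\ closed), that the perturbations $\mu\mapsto(1-t)\mu+t\delta_y$ are weakly small for small $t$, and the standard fact that a typical $\mu\in\pk$ has $\supp\mu=K$. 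The inequality $\dsupsmallmu\ge\dboxsuploc(K)$ is exactly the content of Theorem~A. For $\dsupsmallmu\le\dboxsuploc(K)$: since $\dboxsuploc(K)=\inf_{x\in K,\,r>0}\dboxsup(K\cap B(x,r))$, pick for each $n$ a ball $B_n=B(x_n,r_n)$ with $x_n\in K$ and $\dboxsup(K\cap B_n)<\dboxsuploc(K)+1/n$; then $\{\mu:\mu(B_n)>0\}$ is open and dense (perturb by a point mass lying in the nonempty set $K\cap B_n$), and on the dense $G_\delta$ set $\bigcap_n\{\mu:\mu(B_n)>0\}$ one gets, taking $E=K\cap B_n$, that $\dsupsmallmu\le\dboxsuploc(K)+1/n$ for every $n$.

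For $\dsupbigmu\le\dsuplocmax(K)$, fix $\beta>\dsuplocmax(K)$. Since $\dboxsuploc(K\cap B(y,\rho))\le\dsuplocmax(K)<\beta$ for all $y\in K$, $\rho>0$, unwinding the definition of $\dboxsuploc$ (a nonempty relatively open subset of $K$ contains some $K\cap B(y_0,\rho_0)$ with $y_0\in K$, and from $\dboxsuploc(K\cap B(y_0,\rho_0))<\beta$ one extracts $z_0,s_0$ with $\dboxsup(K\cap B(y_0,\rho_0)\cap B(z_0,s_0))<\beta$) shows that every nonempty relatively open subset of $K$ contains a nonempty relatively open $W$ with $\dboxsup(W)<\beta$. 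Hence the union $U_\beta$ of all such $W$ is open and dense in $K$, so $K_\beta:=K\setminus U_\beta$ is a \emph{fixed} closed nowhere dense subset of $K$, and $U_\beta=\bigcup_{i\ge1}W_i$ for relatively open $W_i$ with $\dboxsup(W_i)<\beta$ (Lindel\"of). By upper semicontinuity of $\mu\mapsto\mu(K_\beta)$ and a perturbation by a point mass in the dense set $U_\beta$, the set $\{\mu:\mu(K_\beta)=0\}$ is a dense $G_\delta$; on it $\mu(U_\beta)=1$, so for any $\veps>0$ there is $m$ with $\mu(W_1\cup\dots\cup W_m)>1-\veps$, whence $E:=\overline{W_1}\cup\dots\cup\overline{W_m}$ satisfies $\mu(E)>1-\veps$ and $\dboxsup(E)=\max_{i\le m}\dboxsup(W_i)<\beta$ by finite stability and closure invariance of $\dboxsup$. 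Since $\veps$ was arbitrary and $\dsupbigmu$ is the supremum over $\veps>0$ of the (nondecreasing as $\veps\downarrow0$) quantity $\inf_{\mu(E)>1-\veps}\dboxsup(E)$, this gives $\dsupbigmu\le\beta$; intersecting over $\beta=\dsuplocmax(K)+1/n$ finishes this half.

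The remaining inequality $\dsupbigmu\ge\dsuplocmax(K)$ is the heart of the matter, and the idea is a localized form of Theorem~A. Fix a countable family $\mathcal B$ of open balls — centers in a countable dense subset of $K$, rational radii — which (by a routine lemma exploiting that $\dboxsuploc$ does not decrease under passing to a relatively open subset) is rich enough that $\sup_{B\in\mathcal B}\dboxsuploc(\overline{B}\cap K)=\dsuplocmax(K)$. The claim to prove is: for a typical $\mu$, every $B\in\mathcal B$ with $\mu(B)>0$ satisfies $\dsupsmall(\mu|_{\overline{B}\cap K})\ge\dboxsuploc(\overline{B}\cap K)$. Granting it, fix $B=B(y,\rho)\in\mathcal B$; since $\supp\mu=K$ and $y\in K$ we have $\mu(B)>0$, and for every $\veps<\mu(B)$ and every Borel $E$ with $\mu(E)>1-\veps$ we get $\mu(E\cap B)>0$, hence $\dboxsup(E)\ge\dboxsup(E\cap B)\ge\dsupsmall(\mu|_{\overline{B}\cap K})\ge\dboxsuploc(\overline{B}\cap K)$; therefore $\dsupbigmu\ge\dboxsuploc(\overline{B}\cap K)$, and taking the supremum over $\mathcal B$ gives $\dsupbigmu\ge\dsuplocmax(K)$.

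To prove the localized claim one can either re-run Myjak and Rudnicki's proof of the lower bound in Theorem~A simultaneously over all balls of $\mathcal B$, or transfer genericity: Theorem~A gives, for each compact $L=\overline{B}\cap K$, a dense $G_\delta$ set $\mathcal R_L\subseteq\mathcal P(L)$ on which $\dsupsmall\ge\dboxsuploc(L)$, and one shows that $\{\mu\in\pk:\mu(B)=0\text{ or }\mu|_L/\mu(L)\in\mathcal R_L\}$ is residual in $\pk$ — using that the partially defined restriction map $\mu\mapsto\mu|_L/\mu(L)$ is continuous at every $\mu$ with $\mu(\partial B\cap K)=0$ and that $\mu$ may be perturbed by freely redistributing mass inside $L$ — and then intersecting over the countable family $\mathcal B$. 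I expect the main obstacle to be exactly this step: the restriction map need not send typical measures to typical measures, so either the category argument of Theorem~A must be redone uniformly over a dense family of sub-balls, or the discontinuity of the restriction map must be circumvented by restricting to balls whose bounding sphere is $\mu$-null for a typical $\mu$ (arrangeable within a still rich enough family). A smaller, routine point is the reconciliation of the open-ball and closed-ball versions of $\dboxsuploc$ and $\dsuplocmax$, needed to pass between $\dboxsup(K\cap B)$ and $\dboxsuploc(\overline{B}\cap K)$ at the two ends of the argument. The desired dense $G_\delta$ set is then the intersection of all those produced above, on which all four inequalities, hence both equalities of the theorem, hold.
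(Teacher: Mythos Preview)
Your argument has one genuine gap and one smaller slip.

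\textbf{The gap.} For $\dsupbigmu\ge\dsuplocmax(K)$ you reduce to the localized claim ``for a typical $\mu$, every $B\in\mathcal B$ with $\mu(B)>0$ satisfies $\dsupsmall(\mu|_{\overline B\cap K})\ge\dboxsuploc(\overline B\cap K)$'' and then propose two strategies without carrying either out. You correctly flag the restriction-map route as dangerous: continuity at $\mu$ with $\mu(\partial B)=0$ does \emph{not} by itself guarantee that the preimage of a residual set is residual, and making this precise (openness of the map on a suitable comeager domain, or a section argument) is not a routine detail. The paper avoids this transfer entirely. For each rational $t<\dsuplocmax(K)$ it fixes one witnessing ball $B(y,\kappa)$ with $\dboxsuploc(K\cap B(y,\kappa))>t$, sets $F=K\cap B(y,\kappa)$, $G=K\cap B(y,\kappa/2)$, and builds an explicit dense family: for $x\in F$ and small $s$, choose a $2r_{x,s}$-separated set $\Lambda_{x,s}\subset B(x,s)\cap F$ of cardinality $\ge r_{x,s}^{-t}$ (possible since $\dboxsup(B(x,s)\cap F)>t$), let $\mu_{x,s}$ be uniform on $\Lambda_{x,s}$, and for $x\notin F$ put $\mu_{x,s}=\delta_x$. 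Finite convex combinations $\sum p_x\mu_{x,s}$ are dense, and the residual set $\mathcal R$ is obtained from small $L$-balls around them, intersected with $\{\mu:\mu(G)>0\}$. For $\mu\in\mathcal R$ one chooses $\veps<\mu(G)$, so any $E$ with $\mu(E)>1-\veps$ meets $G$ in positive measure; passing to a nearby $\sum p_x\mu_{x,s}$ forces some $x_n\in F$ with $\mu_{x_n,s_n}(E'(r_n/2))\ge\tfrac12\mu(E')$, and the packing $\Lambda_{x_n,s_n}$ then yields a packing of $E'$ of size $\gtrsim r_{x_n,s_n}^{-t}$. This is exactly ``re-running Myjak--Rudnicki inside one ball'', done once per rational $t$ rather than uniformly over a family $\mathcal B$; either way, you cannot skip the construction.

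\textbf{The slip.} In your proof of $\dsupbigmu\le\dsuplocmax(K)$, the density of $\{\mu:\mu(K_\beta)=0\}$ is not obtained by the perturbation $(1-t)\mu+t\delta_y$ with $y\in U_\beta$: this does not kill the mass $\mu(K_\beta)$. What works is to approximate $\mu$ by a finitely supported measure and then slide each atom into the dense open set $U_\beta$. Once fixed, your argument here is correct and in fact more conceptual than the paper's, which instead picks a dense sequence $(y_n)$, finds for each $n$ a small ball $B(x_n,r_n)$ near $y_n$ with $\dboxsup(K\cap B(x_n,r_n))\le t$, and builds the residual set from $L$-neighborhoods of convex combinations of the $\delta_{x_i}$.

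Your treatments of $\dsupsmallmu\le\dboxsuploc(K)$ and of the lower bound $\dsupsmallmu\ge\dboxsuploc(K)$ (via Theorem~A) are fine and match the paper's.
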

If we apply this theorem with $K=\{0\}\cup[1,2]$, then we find that a typical measure $\mu\in\pk$ satisfies
$$\dsupsmallmu=0\textrm{ and }\dsupbigmu=1.$$

\subsection{Multifractal box dimensions}
In \cite{OL11}, L. Olsen has put the work of Myjak and Rudnicki in a more general context, that of multifractal box dimensions of measures,
which is interesting by itself. Fix a Borel probability measure $\pi$ on $\mathbb R^d$ with support $K$. For a bounded subset $E$ of $K$,
the \emph{multifractal box dimensions} of $E$ with respect to $\pi$ are defined as follows. For $r>0$ and a real number $q$, write
$$\npiq(E,r)=\inf_{(B(x_i,r))\textrm{ is a cover of }E}\sum_i \pi\big(B(x_i,r)\big)^q.$$
The \emph{lower} and \emph{upper covering multifractal box dimensions} of $E$ of order $q$ with respect to $\pi$ are defined by
\begin{eqnarray*}
\dinfpiq(E)&=&\liminf_{r\to 0}\frac{\log\npiq(E,r)}{-\log r}\\
\dsuppiq(E)&=&\limsup_{r\to 0}\frac{\log\npiq(E,r)}{-\log r}.
\end{eqnarray*}

Let now $\mu\in\pk$. We define the \emph{small} and \emph{big lower multifractal box dimensions} of $\mu$ of order $q$ with respect to the measure
$\pi$ (resp. the \emph{small} and \emph{big upper multifractal box dimensions} of $\mu$ of order $q$ with respect to the measure
$\pi$) by 
$$\begin{array}{rclcrcl}
\dinfsmallpiq(\mu)&=&\inf_{\mu(E)>0}\dinfpiq(E)&\quad&\dinfbigpiq(\mu)&=&\lim_{\veps>0}\inf_{\mu(E)>1-\veps}\dinfpiq(E)\\[0.2cm]
\dsupsmallpiq(\mu)&=&\inf_{\mu(E)>0}\dsuppiq(E)&\quad&\dsupbigpiq(\mu)&=&\lim_{\veps>0}\inf_{\mu(E)>1-\veps}\dsuppiq(E).
\end{array}$$

Multifractal box dimensions of measures play a central role in multifractal analysis. For instance, the multifractal box dimensions of measures in $\mathbb R^d$
having some degree of self-similarity have been intensively studied (see \cite{Fal97} and the references therein). In \cite{OL11}, L. Olsen give estimations
of the typical multifractal box dimensions of measures, in the spirit of Myjak and Rudnicki. To state his result, we need a few definitions. Firstly, the 
\emph{upper moment scaling} of $\pi$ is the function $\tau_\pi:\mathbb R\to\mathbb R$ defined by
\begin{eqnarray*}
\tau_\pi(q)&=&\dsuppiq(K).
\end{eqnarray*}
The \emph{local upper multifractal box dimension of $K$ of order $q$} is defined by 
$$\dsuppilocq(K)=\inf_{x\in K}\inf_{r>0}\dsuppiq\big(K\cap B(x,r)\big).$$
This last quantity will be also called the \emph{local upper moment scaling} of $\pi$ and will be denoted by $\taupiloc(q)$.
Finally, let 
\begin{eqnarray*}
\Dsuppi(-\infty)&=&\limsup_{r\to 0}\frac{\log \inf_{x\in K}\pi\big(B(x,r)\big)}{\log r}\\
\Dinfpi(+\infty)&=&\liminf_{r\to 0}\frac{\log \sup_{x\in K}\pi\big(B(x,r)\big)}{\log r}.
\end{eqnarray*}
Recall also that a measure $\pi$ on $\mathbb R^d$ is called a \emph{doubling measure} provided there exists $C>0$ such that 
$$\sup_{x\in\supp(\pi)}\sup_{r>0}\frac{\pi\big(B(x,2r)\big)}{\pi\big(B(x,r)\big)}\leq C.$$
We can now give Olsen's result.
\begin{OLSEN}
Let $\pi$ be a Borel probability measure on $\mathbb R^d$ with compact support $K$. 
\begin{enumerate}
\item A typical measure $\mu\in\pk$ satisfies 
$$
\begin{array}{rcccccll}
-q\Dinfpi(+\infty)&\leq&\dinfsmallpiq(\mu)&\leq&\dinfbigpiq(\mu)&\leq&-q\Dsuppi(-\infty)&\quad\textrm{for all }q\leq 0,\\
-q\Dsuppi(-\infty)&\leq&\dinfsmallpiq(\mu)&\leq&\dinfbigpiq(\mu)&\leq&-q\Dinfpi(+\infty)&\quad\textrm{for all }q\geq 0.
\end{array}
$$
\item If $\pi$ is a doubling measure, then a typical measure $\mu\in\pk$ satisfies
$$\taupiloc(q)\leq\dsupsmallpiq(\mu)\leq \dsupbigpiq(\mu)\leq\taupi(q)\textrm{ for all }q\leq 0.$$
If moreover $K$ does not contain isolated points, then this result remains true for all $q\in\mathbb R$.
\end{enumerate}
\end{OLSEN}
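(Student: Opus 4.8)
The four inequalities are of three distinct natures, and I would organize the argument accordingly. \emph{(a) The leftmost inequalities in~(1) are deterministic.} Write $m(r)=\inf_{x\in K}\pi(B(x,r))$ and $M(r)=\sup_{x\in K}\pi(B(x,r))$; since $x\mapsto\pi(B(x,r))$ is lower semicontinuous on the compact set $K=\supp\pi$ one has $m(r)>0$. For $q\le0$ every term of a covering sum obeys $\pi(B(x_i,r))^q\ge M(r)^q$, so $\npiq(E,r)\ge M(r)^q$ for every nonempty $E\subseteq K$, and after taking logarithms and a $\liminf$ this gives $\dinfpiq(E)\ge -q\,\Dinfpi(+\infty)$; for $q\ge0$ one uses $\pi(B(x_i,r))^q\ge m(r)^q$ and obtains $\dinfpiq(E)\ge -q\,\Dsuppi(-\infty)$. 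Together with the trivial $\dinfsmallpiq(\mu)\le\dinfbigpiq(\mu)$ this yields the two left inequalities for \emph{every} $\mu\in\pk$. \emph{(b) The rightmost inequalities in~(2) are essentially free}: $\mu(K)=1>1-\veps$, so $E=K$ is admissible in the ``big'' dimensions, whence $\dsupsmallpiq(\mu)\le\dsupbigpiq(\mu)\le\dsuppiq(K)=\taupi(q)$. (The rightmost inequalities in~(1) are \emph{not} of this kind: $\dinfbigpiq(\mu)$ is not controlled by $\dinfpiq(K)$, as the case $q=0$ already shows — the relevant quantity there being the lower box dimension of $K$, which can be positive while $-q\,\Dsuppi(-\infty)=0$. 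They belong with the genuinely generic statements below.)

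\emph{The rightmost inequalities in~(1).} Fix $q\le0$ and a sequence $s_k\downarrow0$ realizing $\Dsuppi(-\infty)=\limsup_{r\to0}\frac{\log m(r)}{\log r}$. The plan is the standard Baire scheme. For each $n$ one defines an open set $\G_n\subseteq\pk$ of measures putting mass $>1-\veps_n$ (with $\veps_n\downarrow0$) on a finite union of balls whose number of centres is controlled at some small radius; openness comes from lower semicontinuity of $\mu\mapsto\mu(U)$ for $U$ open, and density from the density of finitely supported measures $\nu=\sum_i a_i\delta_{y_i}$ in $\pk$ — given $\nu$ and $\delta>0$ one keeps each $a_i$-chunk inside $B(y_i,\delta)$ and moves a $\delta$-fraction of the mass onto a sparse cluster of points accumulating at one limit point, chosen (by a compactness extraction over the minimizing scales of $m$) so that at the scales $s_k$ one reads off $m(s_k)$. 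On the dense $G_\delta$ set $\bigcap_n\G_n$, a Borel--Cantelli/diagonal extraction from the witnessing covers (with $\veps_n$ taken summable) produces, for each $\veps>0$, a single set $E$ with $\mu(E)>1-\veps$ whose covering number $N(E,s_k)$ (the least number of balls of radius $s_k$ covering $E$) is subpolynomial in $1/s_k$ along the $s_k$. Since for $q\le0$ every ball of radius $s_k$ has $\pi$-measure at least $m(s_k)$, one has $\npiq(E,s_k)\le N(E,s_k)\,m(s_k)^q$, hence
\[
\dinfpiq(E)\le\liminf_{k\to\infty}\Bigl(\frac{\log N(E,s_k)}{-\log s_k}+(-q)\,\frac{\log m(s_k)}{\log s_k}\Bigr)=-q\,\Dsuppi(-\infty),
\]
and letting $\veps\to0$ gives $\dinfbigpiq(\mu)\le -q\,\Dsuppi(-\infty)$; the case $q\ge0$ is identical with $m,\Dsuppi(-\infty)$ replaced by $M,\Dinfpi(+\infty)$.

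\emph{The leftmost inequality in~(2).} One half is again almost free: a typical $\mu$ has full support (a classical dense $G_\delta$ property), and choosing $B(x_0,r_0)$ with $\dsuppiq(K\cap B(x_0,r_0))$ within $\delta$ of $\taupiloc(q)=\inf_{x,r}\dsuppiq(K\cap B(x,r))$, the set $E=K\cap\overline{B(x_0,r_0)}$ has $\mu(E)>0$ and $\dsuppiq(E)\le\taupiloc(q)+O(\delta)$, so $\dsupsmallpiq(\mu)\le\taupiloc(q)$. The content is the reverse bound $\dsupsmallpiq(\mu)\ge\taupiloc(q)$, i.e. that for a typical $\mu$ every $E$ with $\mu(E)>0$ has $\dsuppiq(E)\ge\taupiloc(q)$. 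Here the doubling hypothesis enters decisively: one shows that a set with $\dsuppiq(E)<\taupiloc(q)$ must be relatively sparse inside $K$ — roughly, it cannot be $\gamma$-dense in any $K\cap B(x,r)$ at the scales realizing $\dsuppiq(K\cap B(x,r))$, for otherwise a doubling comparison of covering sums of $E$ with those of $K\cap B(x,r)$ would force $\dsuppiq(E)\ge\dsuppiq(K\cap B(x,r))\ge\taupiloc(q)$ — and then that a typical measure charges no such sparse set, the corresponding dense $G_\delta$ being produced by perturbing finitely supported measures to measures that are, near each of their atoms, comparable to $\pi$ (which, being doubling, vanishes on such sparse sets).

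\emph{Main obstacle.} The delicate point is exactly this lower bound $\taupiloc(q)\le\dsupsmallpiq(\mu)$: full support and non-atomicity of a typical $\mu$ do not suffice (a typical measure may well be carried by a Cantor set), and one must argue that the family of ``$\dsuppiq$-small'' sets is, in the Hausdorff metric, meagre enough that ``$\mu$ charges one of them'' defines a set of first category. This — together with the covering-sum comparisons between a set and a slight enlargement that make the closed sets appearing in the various $\G_n$ manageable — is where the doubling hypothesis on $\pi$ is used in an essential way (for $q<0$ the unweighted sums $\sum\pi(B(x_i,r))^q$ are otherwise wildly unstable under small perturbations of the centres). The restriction to $q\le0$ when $K$ has isolated points is forced by the same construction: at an isolated $x_0$ one has $K\cap B(x_0,r_0)=\{x_0\}$ for small $r_0$, so $\dsuppiq(\{x_0\})=0$ for $q>0$ makes $\taupiloc(q)=0$ and~(2) vacuous there, while the spreading step requires $K\cap\overline{B(x_0,r_0)}$ to carry a nondegenerate piece of $K$; excluding isolated points restores the full range of $q$.
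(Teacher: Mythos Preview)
The paper does not prove Theorem~B; it is quoted from \cite{OL11} as a background result that the paper then sharpens in Theorems~\ref{THMMAINBIG}--\ref{THMMAINSMALLBIS}. So there is no proof in the paper to compare against directly. Your identification of the deterministic pieces (the leftmost inequalities in~(1) and the rightmost one in~(2)) is correct, and your Baire scheme for the rightmost inequalities in~(1) is essentially the construction the paper carries out in Section~4.2 for its sharper bound: build dense families of measures concentrated on finitely many well-chosen points, control the covering sums along a subsequence of scales, and intersect.

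Where your sketch has a genuine gap is the lower bound $\taupiloc(q)\le\dsupsmallpiq(\mu)$ in~(2). Your plan --- argue that sets $E$ with $\dsuppiq(E)<\taupiloc(q)$ are ``sparse'', and that a typical $\mu$ (one ``locally comparable to $\pi$ near its atoms'') charges no such set --- is not workable as stated: there are uncountably many candidate $E$, no evident $\sigma$-ideal containing them all that a typical measure avoids, and ``locally comparable to $\pi$'' is neither precise nor a $G_\delta$ condition in $\pk$. The paper's own proof of this inequality (Section~3.2, case~\textbf{(A)} of Theorem~\ref{THMMAINBIG}, which in fact removes Olsen's no-isolated-points hypothesis altogether) is quite different and more concrete. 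For each $x\in K$ and $s>0$ one fixes a centred packing $\Lambda_{x,s}$ of $K\cap B(x,s)$ at a scale $r_{x,s}$ witnessing a large value of $\ppiq$, and defines an auxiliary measure $\mu_{x,s}$ supported on $\Lambda_{x,s}$ with weights proportional to $\pi(B(z,r_{x,s}))^q$. The dense $G_\delta$ is built from convex combinations of the $\mu_{x,s}$; for $\mu$ in it and any $E$ with $\mu(E)>0$, Lemma~\ref{LEMTOPO1} forces the $r_{A_n,s_n}/2$-enlargement of $E$ to carry a fixed fraction of some $\mu_{x_n,s_n}$, and one then extracts from $\Lambda_{x_n,s_n}$ a centred packing of $E$ itself witnessing $\dsuppiq(E)\ge t$. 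The mechanism is to \emph{inject} specific good packings into every set of positive $\mu$-measure via Fortet--Mourier stability, not to exclude bad sets from the support of $\mu$.
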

Putting $q=0$, this implies in particular Myjak and Rudnicki's theorem.
†
\subsection{Statement of our main results}
Of course, the questions asked after Theorem A have also a sense in this more general context. To answer them, we have to introduce
the \emph{maximal local upper moment scaling} of $\pi$ which is defined by
$$\taupilocmax(q)=\sup_{y\in K,\rho>0}\dsuppilocq\big(K\cap B(y,\rho)\big).$$
\begin{theorem}\label{THMMAINBIG}
Let $\pi$ be a doubling Borel probability measure on $\mathbb R^d$ with compact support $K$. Then a typical measure $\mu\in\pk$ satisfies, for any $q\in\mathbb R$,
\begin{eqnarray*}
\dsupsmallpiq(\mu)&=&\taupiloc(q)\\
\dsupbigpiq(\mu)&=&\taupilocmax(q).
\end{eqnarray*}
\end{theorem}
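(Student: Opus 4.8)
The plan is to run a Baire category argument, splitting the two claimed equalities into four ``typical'' inequalities: the upper bounds $\dsupsmallpiq(\mu)\le\taupiloc(q)$ and $\dsupbigpiq(\mu)\le\taupilocmax(q)$, and the lower bounds $\dsupsmallpiq(\mu)\ge\taupiloc(q)$ and $\dsupbigpiq(\mu)\ge\taupilocmax(q)$. Since a countable intersection of dense $G_\delta$ sets is again a dense $G_\delta$, it suffices to produce, for each inequality, a residual set of measures on which it holds. The lower bound for the small dimension is the second part of Olsen's Theorem B, together with the trivial remark that $\dsuppiq\ge 0$ and $\taupiloc(q)=0$ as soon as $K$ has an isolated point (which disposes of the case $q>0$ left open there). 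So the genuine work lies in the other three inequalities, and I expect the lower bound on the big dimension to be the main obstacle.

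For the two upper bounds, first unfold the definitions: $\dsupbigpiq(\mu)\le c$ iff for all $m,n$ there is $E$ with $\mu(E)>1-1/m$ and $\dsuppiq(E)<c+1/n$, while $\dsupsmallpiq(\mu)\le c$ iff for all $n$ there is $E$ with $\mu(E)>0$ and $\dsuppiq(E)<c+1/n$. Two elementary facts will be used throughout: $\npiq$ is monotone under inclusion, so $\dsuppiq(E_1\cup\cdots\cup E_N)=\max_i\dsuppiq(E_i)$ for finite unions; and $\mu\mapsto\mu(U)$ is lower semicontinuous on $\pk$ for relatively open $U\subseteq K$, so that $\{\mu:\mu(U)>t\}$ is open. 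To build, near a given $\nu$, a dense open subset of $\{\mu:\exists E,\ \mu(E)>1-1/m,\ \dsuppiq(E)<\taupilocmax(q)+1/n\}$, cover $K$ by finitely many balls $B(y_j,\rho)$ with $\rho$ small, pick inside each $B(y_j,\rho)$ a tiny relatively open $U_j=K\cap B(x_j,r_j)$ with $\dsuppiq(U_j)<\dsuppilocq(K\cap B(y_j,\rho))+1/n\le\taupilocmax(q)+1/n$ (possible by the very definition of $\dsuppilocq$), and spread the mass $\nu$ assigns near $y_j$ onto a measure carried by $U_j$. The resulting $\mu$ lies in the prescribed neighbourhood of $\nu$, and on the open set $\{\mu':\mu'(U)>1-1/m\}$, with $U=\bigcup_jU_j$ relatively open and $\dsuppiq(U)<\taupilocmax(q)+1/n$, every $\mu'$ has the desired property. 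Intersecting over $m,n$ yields a dense $G_\delta$ on which $\dsupbigpiq(\mu)\le\taupilocmax(q)$; for the small dimension the argument is identical, using a single tiny ball and an arbitrarily small amount of mass.

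For $\dsupbigpiq(\mu)\ge\taupilocmax(q)$, write $\taupilocmax(q)=\sup_B\dsuppilocq(K\cap B)$ over a countable family of balls $B$ with rational data. The key reduction: if $\mu$ has full support (a residual condition) and, for every such $B$, every $E\subseteq K\cap B$ with $\mu(E)>0$ satisfies $\dsuppiq(E)\ge\dsuppilocq(K\cap B)$, then $\dsupbigpiq(\mu)\ge\taupilocmax(q)$ — indeed, for $\varepsilon<\mu(K\cap B)$ any $E$ with $\mu(E)>1-\varepsilon$ has $\mu(E\cap K\cap B)>0$, whence $\dsuppiq(E)\ge\dsuppiq(E\cap K\cap B)\ge\dsuppilocq(K\cap B)$. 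So it is enough to prove, for a fixed ball $B$, that a typical $\mu$ charges no subset of $K\cap B$ of $\dsuppiq$-dimension below $c_B:=\dsuppilocq(K\cap B)$ — a localized version of Olsen's small lower bound. I would establish this by constructing, for each $m$, a dense open set $\mathcal G_m$ of measures that, at some fine scale $r<1/m$ and within some fine tiling of $K$ by balls $B(y,\eta)$ with $\eta<1/m$, distribute their mass so that the $\mu$-mass of each $r$-grid cell $T$ is at most a constant times $\pi(T\cap K)^q\big/\npiq\big(K\cap B(y,\eta),r\big)$ times the $\mu$-mass of the surrounding tile $B(y,\eta)$, the scale $r$ being chosen among those realizing the $\limsup$ in $\dsuppiq(K\cap B(y,\eta))$, where $\dsuppiq(K\cap B(y,\eta))\ge\dsuppilocq(K\cap B(y,\eta))\ge c_B$. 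Density holds because the perturbation only redistributes mass inside the small tiles; openness because the constraints are finitely many inequalities between a lower and an upper semicontinuous function of $\mu$. On $\bigcap_m\mathcal G_m$ one then checks that an $E\subseteq K\cap B$ with $\mu(E)=\delta_0>0$ and $\dsuppiq(E)<b'<c_B$ is impossible: covering $E$ by $r$-balls and comparing with the grid via the doubling property gives $\delta_0=\mu(E)\lesssim\sum_y\mu(B(y,\eta))\,\npiq\big(E\cap B(y,\eta),r\big)\big/\npiq\big(K\cap B(y,\eta),r\big)\lesssim r^{\,c_B-b'-o(1)}\to0$ as $m\to\infty$.

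The main obstacle is exactly this last construction: one must balance the strength of the constraint defining $\mathcal G_m$ (needed to force the dimension bound) against its density (the perturbed measure must remain close to an arbitrary target), which is precisely what forces the constraint to live at a single scale and inside small tiles, and what makes the careful choice of the ``good'' scale $r$ and the use of the doubling hypothesis essential — doubling is what renders $\npiq(K\cap B(y,\eta),r)$ comparable to the grid sum $\sum_{T\subseteq B(y,\eta)}\pi(T\cap K)^q$ and what lets an arbitrary cover of $E$ at scale $r$ be compared, with bounded overlap, to the grid. Several technical points also require care: the behaviour of $\pi$ near the boundary of the localizing ball $B$ (so that $\pi$ is still doubling at small scales near $K\cap B$, and $\npiq$ of sets inside $B$ agrees with the intrinsic quantity, which may force one to work with a slightly smaller ball), the easy but separate case $q>0$ with isolated points, and the reduction of the uncountably many competitor sets $E$ to a countable skeleton of rational covers when checking residuality.
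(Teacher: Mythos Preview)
Your upper-bound arguments are essentially the paper's: the paper too chooses, for each point of a dense sequence, a small ball of small $\dsuppiq$-dimension, pushes mass onto these balls via Dirac combinations, and uses the weak continuity of $\mu\mapsto\mu(U)$ on open $U$. So there is nothing to add there.

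There is, however, a genuine error in your handling of the \emph{small} lower bound. The ``trivial remark'' that $\dsuppiq\ge 0$ and $\taupiloc(q)=0$ whenever $K$ has an isolated point is false for $q>1$. Take $K=\{0\}\cup[1,2]$ with $\pi=\tfrac12\delta_0+\tfrac12\,\mathrm{Leb}|_{[1,2]}$ and $q=2$: for any interval $I\subset[1,2]$ one has $\npiq(I,r)\asymp (|I|/r)\cdot r^2$, hence $\dsuppiq(I)=-1$, so $\taupiloc(2)=\min(0,-1)=-1$, not $0$; and sets $E\subset[1,2]$ with $\dboxsup(E)<1$ have $\dsuppiq(E)<-1$. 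Thus Olsen's Theorem~B plus your remark does not close the gap; the small lower bound must be proved. The paper does this directly (its case~\textbf{(A)}), and in fact your own localized argument, run with $B$ a large ball containing $K$, would give it --- so the ingredients are there, but the shortcut you invoke is not.

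For the big lower bound your route is \emph{different} from the paper's. The paper builds, for each $x\in F$ and each scale $s$, an explicit probability $\mu_{x,s}=\big(\sum_z\pi(B(z,r_{x,s}))^q\big)^{-1}\sum_z\pi(B(z,r_{x,s}))^q\delta_z$ supported on a good $r_{x,s}$-packing $\Lambda_{x,s}$ of $B(x,s)\cap F$ (with $r_{x,s}$ chosen so that the packing witnesses $\ppiq\ge r_{x,s}^{-t}$), forms a dense $G_\delta$ out of small $L$-balls around convex combinations of such $\mu_{x,s}$, and then, for $\mu$ in this set and $E$ of large $\mu$-mass, locates one tile center $x_n$ where $\mu_{x_n,s_n}(E'(r/2))>0$, shifts the packing points into $E'$, and reads off a packing of $E'$ with $\sum\pi(B(\cdot,cr))^q\gtrsim r^{-t}$. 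Your approach is instead a mass-distribution principle: force typical $\mu$ to satisfy $\mu(T)\lesssim \mu(B(y,\eta))\,\pi(T)^q/\npiq(K\cap B(y,\eta),r)$ on grid cells, and derive a contradiction from $\mu(E)>0$ with $\dsuppiq(E)<c_B$. This can be made to work, but as written it has a synchronization issue: you speak of ``some fine scale $r<1/m$'' (singular), yet the limsup scales realizing $\dsuppiq(K\cap B(y,\eta))\ge c_B$ depend on the tile $y$. The fix is to allow a scale $r_y<1/m$ per tile; since $\dsuppiq(E)<b'$ gives $\npiq(E,r)\le r^{-b'}$ for \emph{all} small $r$, the per-tile estimate $\mu(E\cap B(y,\eta))\lesssim \mu(B(y,\eta))\,r_y^{\,c_B-b'-\varepsilon}\le\mu(B(y,\eta))\,m^{-(c_B-b'-\varepsilon)}$ still sums. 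The paper's packing-measure construction sidesteps this entirely, because it only ever needs \emph{one} tile and that tile's own scale; what you gain in return is a cleaner reduction of the big bound to a localized small bound, at the cost of the grid/cover comparisons and semicontinuity bookkeeping you flag at the end.
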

Putting $q=0$, we retrieve Theorem \ref{THMMRMIEUX}.

\medskip

We can also observe that Olsen's theorem does not settle completely the typical values of the lower multifractal box dimensions.
For instance, when computed for a self-similar compact set $K$ satisfying the open set condition (see below) 
and an associated self-similar measure $\pi$, the values of $\Dinfpi(+\infty)$ and $\Dsuppi(-\infty)$ are in general different.
Moreover, it has been pointed out in \cite{BAYLQ} that, given a fixed compact set $K\subset\mathbb R^d$, a typical probability
measure $\pi\in\pk$ satisfies $\Dsuppi(-\infty)=+\infty$ and $\Dinfpi(+\infty)=0$!

We have been able to compute the typical value of the big lower multifractal box dimension of a measure.
 As before, we need to introduce new definitions, which are uniform versions of $\Dsuppi(-\infty)$ and $\Dinfpi(+\infty)$. Let $\pi$ be a Borel probability measure
with support $K$. Define
$$\Dsuppiunif(-\infty)=\inf_N\inf_{\substack{y_1,\dots,y_N\in K\\\rho>0}}\limsup_{r\to 0}\inf_{i=1,\dots,N}\frac{\log\big(\inf_{x\in B(y_i,\rho)}\pi(B(x,r))\big)}{\log r}$$
$$\Dinfpiunif(+\infty)=\sup_N\sup_{\substack{y_1,\dots,y_N\in K\\\rho>0}}\liminf_{r\to 0}\sup_{i=1,\dots,N}\frac{\log\big(\sup_{x\in B(y_i,\rho)}\pi(B(x,r))\big)}{\log r}$$

\begin{theorem}\label{THMMAINSMALL}
Let $\pi$ be a Borel probability measure with compact support $K$. Then a typical measure $\mu\in\pk$ satisfies
$$\dinfbigpiq(\mu)=
\begin{cases}
-q\Dsuppiunif(-\infty)&\textrm{provided } q\geq 0\\
-q\Dinfpiunif(+\infty)&\textrm{provided } q\leq 0.
\end{cases}
$$
\end{theorem}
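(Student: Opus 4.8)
The plan is to establish Theorem~\ref{THMMAINSMALL} through the usual Baire category scheme: exhibit, for each of the two quantities, a dense $G_\delta$ set of measures realizing the desired value. By the definition of $\dinfbigpiq(\mu)$ as $\lim_{\veps\to 0}\inf_{\mu(E)>1-\veps}\dinfpiq(E)$, this amounts to controlling, for small $\veps$, the behavior of $\dinfpiq(E)$ over sets $E$ carrying almost all the mass of $\mu$. Fix $q\geq 0$ (the case $q\leq 0$ is symmetric, exchanging infima and suprema, $\Dsuppiunif(-\infty)$ and $\Dinfpiunif(+\infty)$). I would split the statement into an upper bound $\dinfbigpiq(\mu)\leq -q\Dsuppiunif(-\infty)$ valid for \emph{every} $\mu\in\pk$, and a lower bound $\dinfbigpiq(\mu)\geq -q\Dsuppiunif(-\infty)$ valid for a typical $\mu$; together these pin down the typical value.

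For the deterministic upper bound, unwind the definition of $\Dsuppiunif(-\infty)$: given $N$, points $y_1,\dots,y_N\in K$ and $\rho>0$, and a threshold $s$ below the $\limsup$, there are arbitrarily small $r$ with $\inf_{x\in B(y_i,\rho)}\pi(B(x,r))\geq r^{s}$ for all $i\leq N$. One covers the balls $B(y_i,\rho)$ efficiently at scale $r$: each $\pi(B(x_j,r))^q\leq r^{qs}$ so $\npiq\big(\bigcup_i B(y_i,\rho),r\big)$ is at most (number of balls)$\cdot r^{qs}$, where the number of balls needed is polynomial in $1/r$ with exponent the box dimension of $\bigcup_i B(y_i,\rho)\cap K$. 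The point is that \emph{any} measure $\mu$ has, for every $\veps>0$, some finite union of small balls of $\mu$-measure $>1-\veps$ (cover $K$ by finitely many balls of radius $\rho$ and keep those carrying mass; refine $\rho\to 0$), and on such a set the exponent $-q s$ shows up in the limit, giving $\dinfpiq(E)\leq -qs + o(1)$ as one sends $\rho\to0$ and then takes the infimum over $N$ and the $y_i$; optimizing $s$ yields $-q\Dsuppiunif(-\infty)$. I expect this direction to be mostly bookkeeping.

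The delicate direction is the lower bound for typical $\mu$. The target set is $\mathcal G=\bigcap_{k}\mathcal G_k$ where $\mathcal G_k$ encodes ``$\dinfbigpiq(\mu)\geq -q\Dsuppiunif(-\infty)-1/k$''. To make each $\mathcal G_k$ open and dense, I would work with a countable dense family of finitely supported measures and perturb. The crucial geometric input is: if $\mu=\sum_j a_j\delta_{z_j}$ is finitely supported with $\#\{z_j\}=N_0$ atoms, then for \emph{any} set $E$ with $\mu(E)>1-\min_j a_j$, $E$ must meet a prescribed large subset of the atoms, so $E$ contains points near $N$ of the $z_j$'s; by the definition of $\Dsuppiunif(-\infty)$ with these $z_j$ as the $y_i$'s, for every $\rho>0$ there are arbitrarily small scales $r$ at which $\pi(B(x,r))$ is \emph{small} (no better than $r^{s}$ with $s$ close to $\Dsuppiunif(-\infty)$ from above for at least one $i$, i.e.\ on $B(y_i,\rho)$ some ball has $\pi$-mass $\leq r^{s-o(1)}$)---wait, one needs the opposite inequality. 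Here is the right mechanism: one shows that for a measure whose support is a sufficiently fine net, \emph{every} almost-full set $E$ has $\npiq(E,r)\geq c\, r^{-qs'}$ along a sequence $r\to0$, because covering $E$ forces using balls of small $\pi$-measure somewhere. Concretely, pick $\mu$ supported on a finite $\rho$-net of $K$; then any $E$ with $\mu(E)$ close to $1$ is $\rho$-dense in $K$, so a cover of $E$ at scale $r\ll\rho$ is essentially a cover of $K$, and $\npiq(E,r)\geq \npiq(K,2r)\cdot(\text{correction})$; but $\npiq(K,r)\geq \inf_{x}\pi(B(x,r))^q\cdot(\text{number of balls to cover }K)$, and choosing scales along which $\inf_x\pi(B(x,r))$ is as large as the uniform quantity allows forces the exponent up to $-q\,\Dsuppiunif(-\infty)$.

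Assembling: the main obstacle is the interplay of the three nested limits---$\veps\to0$ in the definition of the big dimension, $\liminf/\limsup$ in the box dimension, and the $\inf_N\inf_{y_i,\rho}\limsup_r$ defining $\Dsuppiunif(-\infty)$---and in particular showing that the scales $r$ at which the uniform estimate is witnessed can be chosen \emph{simultaneously good} for the finitely many relevant balls while remaining a $\limsup$-sequence after the density perturbation. To handle density of $\mathcal G_k$, given an arbitrary target measure $\mu_0$ and $\delta>0$, I would approximate $\mu_0$ weakly by a measure $\mu$ supported on a fine net as above with atoms of comparable weight, check that $\mu$ lies in (an open neighborhood inside) $\mathcal G_k$ by the mechanism of the previous paragraph, and check openness by noting the relevant inequalities on finitely many scales $r$ and finitely many cover-configurations are stable under small weak perturbations of $\mu$ (since they only involve $\mu$-measures of finitely many balls being bounded below). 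The $q\leq 0$ case runs in parallel with $\Dinfpiunif(+\infty)$: there one exploits that balls can have \emph{large} $\pi$-measure, $\pi(B(x,r))^q$ with $q<0$ is then small, and the uniform $\sup$ over $i$ and $\liminf$ over $r$ in $\Dinfpiunif(+\infty)$ plays the symmetric role.
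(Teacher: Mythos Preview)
Your proposal has a genuine structural error: you claim the upper bound $\dinfbigpiq(\mu)\leq -q\Dsuppiunif(-\infty)$ holds for \emph{every} $\mu\in\pk$, but this is false. Take $\pi=\mu$ to be Lebesgue measure on $K=[0,1]$. Then $\Dsuppiunif(-\infty)=1$, yet for any $E$ with $\mu(E)>1-\veps$ one has $N(E,r)\geq (1-\veps)/r$, so $\npiq(E,r)\gtrsim r^{q-1}$ and $\dinfpiq(E)\geq 1-q>-q$ for all $q\geq 0$. Hence $\dinfbigpiq(\mu)\geq 1-q>-q\Dsuppiunif(-\infty)$. The upper bound is a genuinely generic statement and cannot be made deterministic.

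Relatedly, your unwinding of the definition contains a sign slip: if $s$ is below the $\limsup$, then along a subsequence $\inf_i\frac{\log\inf_x\pi(B(x,r))}{\log r}>s$, which (since $\log r<0$) gives $\inf_{x\in B(y_i,\rho)}\pi(B(x,r))<r^s$, not $\geq r^s$. Thus each region $B(y_i,\rho)$ contains \emph{some} point of small $\pi$-mass, not that \emph{all} balls there have controlled mass; this gives you no handle on an arbitrary cover of a large-measure set. Even granting the covering-number estimate you sketch, you would end up with $\dinfpiq(E)\leq -qs+\dboxsup(K)$, and the spurious box-dimension term does not vanish.

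In the paper the roles of the two directions are the reverse of what you propose. The \emph{lower} bound is the easy one: for $t>\Dsuppiunif(-\infty)$ the infimum in the definition is witnessed by some specific $y_1,\dots,y_N,\rho$ with $\limsup_r\inf_i\frac{\log\inf_x\pi(B(x,r))}{\log r}<t$; the open dense set is simply $\bigcap_i\{\mu:\mu(B(y_i,\rho))>0\}$. For such $\mu$ and any $E$ of large measure, $E$ meets every $B(y_i,\rho)$; at each small scale $r$ there is an $i$ for which every ball meeting $B(y_i,\rho)$ has $\pi$-mass $\geq r^t$, so a single covering ball already forces $\npiq(E,r)\geq r^{qt}$, giving $\dinfpiq(E)\geq -qt$. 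The \emph{upper} bound is the construction: for $t<\Dsuppiunif(-\infty)$ and a dense sequence $(y_n)$ with $\rho_n\downarrow 0$, one uses that the limsup exceeds $t$ for \emph{every} finite configuration to find, for each $n$, a scale $r_n<n^{-n}$ and points $x_i^n\in B(y_i,\rho_n)$ with $\pi(B(x_i^n,r_n))\leq r_n^{t}$; the residual set consists of measures well approximated by convex combinations of $\delta_{x_i^n}$, and for those one builds sets $G_l$ of mass tending to $1$ covered by $\bigcup_i B(x_i^{n_k},r_{n_k})$, whence $\npiq(G_l,r_{n_k})\leq n_k r_{n_k}^{qt}$. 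The constraint $r_n<n^{-n}$ is precisely what kills the factor $n_k$ when passing to the $\liminf$; this is the step your ``polynomial in $1/r$'' estimate misses.
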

Unfortunately, we did not find a similar result for the small lower multifractal box dimensions. We have just been able
to improve Olsen's inequality. This improvement is sufficient to conclude for self-similar compact sets. We need to introduce
the following quantities. Let $\pi$ be a Borel probability measure with compact support $K$. Define
\begin{eqnarray*}
 \Dsuppiunifmax(-\infty)&=&\sup_{\substack{z\in K\\ \kappa>0}}\inf_{\substack{y_1,\dots,y_N\in B(z,\kappa)\\\rho>0}}\limsup_{r\to 0}\inf_{i=1,\dots,N}\frac{\log\big(\inf_{x\in B(y_i,\rho)}\pi(B(x,r))\big)}{\log r}\\
\Dsuppimax(-\infty)&=&\sup_{\substack{y\in K\\ \rho>0}}\limsup_{r\to 0}\frac{\log \inf_{x\in B(y,\rho)}\pi\big(B(x,r)\big)}{\log r}\\
\Dinfpiunifmin(+\infty)&=&\inf_{\substack{z\in K\\ \kappa>0}}\sup_{\substack{y_1,\dots,y_N\in B(z,\kappa)\\\rho>0}}\liminf_{r\to 0}\sup_{i=1,\dots,N}\frac{\log\big(\sup_{x\in B(y_i,\rho)}\pi(B(x,r))\big)}{\log r}\\
\Dinfpimin(+\infty)&=&\inf_{\substack{y\in K\\ \rho>0}}\liminf_{r\to 0}\frac{\log \sup_{x\in B(y,\rho)}\pi\big(B(x,r)\big)}{\log r}\\
\end{eqnarray*}
\begin{theorem}\label{THMMAINSMALLBIS}
Let $\pi$ be a Borel probability measure with compact support $K$. Then a typical measure $\mu\in\pk$ satisfies
$$-q\Dsuppimax(-\infty)\leq \dinfsmallpiq(\mu)\leq -q\Dsuppiunifmax(-\infty)\textrm{ provided } q\geq 0$$
$$-q\Dinfpimin(+\infty)\leq \dinfsmallpiq(\mu)\leq -q\Dinfpiunifmin(-\infty)\textrm{ provided } q\leq 0.$$
\end{theorem}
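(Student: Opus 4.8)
The plan is to establish the two left-hand inequalities for \emph{every} $\mu\in\pk$, and the two right-hand ones for a dense $G_\delta$ set of measures; since $\pi$ is not assumed doubling, this is substantially more delicate than Theorem~\ref{THMMAINBIG}. The left-hand bounds need no genericity. Let $E\subseteq K$ with $\mu(E)>0$, fix $e\in E$ (possible as $\mu$ lives on $K$), and let $\rho>0$, $0<r<\rho$. Any $r$-ball cover $(B(x_i,r))_i$ of $E$ has a member $B(x_{i_0},r)\ni e$, so $x_{i_0}\in\overline{B(e,r)}\subseteq B(e,\rho)$; discarding the other non-negative terms,
\[
\npiq(E,r)\ \ge\ \pi\big(B(x_{i_0},r)\big)^{q}\ \ge\ \Big(\inf_{x\in B(e,\rho)}\pi\big(B(x,r)\big)\Big)^{q}\quad(q\ge0),
\]
and, since $t\mapsto t^{q}$ is non-increasing for $q\le0$, the same with $\sup$ in place of $\inf$ when $q\le0$. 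Taking logarithms, dividing by $-\log r$ and passing to $\liminf_{r\to0}$ yields, for every $\rho>0$,
\[
\dinfpiq(E)\ \ge\ -q\limsup_{r\to0}\frac{\log\inf_{x\in B(e,\rho)}\pi(B(x,r))}{\log r}\ \ge\ -q\,\Dsuppimax(-\infty)\qquad(q\ge0),
\]
because the middle term, read at $y=e\in K$ with this $\rho$, is one of the terms of the supremum defining $\Dsuppimax(-\infty)$; symmetrically $\dinfpiq(E)\ge-q\,\Dinfpimin(+\infty)$ for $q\le0$. Taking the infimum over all $E$ with $\mu(E)>0$ gives the left-hand inequalities.

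For the right-hand inequalities genericity is essential — already for $q=0$ they fail, e.g.\ for $K=[0,1]$, $\pi$ normalized Lebesgue measure and $\mu=\pi$, where $\inf_{\mu(E)>0}\dboxinf(E)=1>0$. I would treat $q\ge0$, the case $q\le0$ being symmetric after exchanging $\inf_x$ with $\sup_x$ and ``$\pi$-small'' with ``$\pi$-large'' balls; set $\beta=-q\,\Dsuppiunifmax(-\infty)$. The target is a dense $G_\delta$ set $\mathcal R$ such that every $\mu\in\mathcal R$ carries positive mass on some Borel set $E$ with $\dinfpiq(E)\le\beta+1/k$, for each $k\in\NN$. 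Since $\beta$ equals $\inf_{z\in K,\,\kappa>0}$ of $-q$ times the inner infimum over finite families in $B(z,\kappa)$, one first fixes a near-optimal pair $(z,\kappa)$ and unwinds that inner infimum: this produces, along a sequence of scales $r_j\to0$, finitely many balls $B(y_1,\rho),\dots,B(y_N,\rho)\subseteq B(z,\kappa)$ and points $x^{(j)}_i\in B(y_i,\rho)$ with $\pi(B(x^{(j)}_i,r_j))$ essentially equal to $\inf_{x\in B(y_i,\rho)}\pi(B(x,r_j))$ — so that at scale $r_j$ each $B(y_i,\rho)$ can be covered by $r_j$-balls of small $\pi$-mass.

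The engine is a perturbation lemma of the familiar kind: given any finitely supported $\nu$ (these are dense in $\pk$), any $\eta>0$, and any prescribed union $W$ of finitely many small balls meeting $\supp\nu$, there is a finitely supported $\nu'$ with $d(\nu,\nu')<\eta$ placing a fixed positive fraction $c>0$ of its mass inside $W$, obtained by transporting part of the mass of $\nu$ into the balls of $W$. As the condition ``$\mu(W)>c$'' is open in the weak topology, the set of $\mu$ achieving this for \emph{some} admissible configuration attached to level $j$ and to a near-optimal $(z,\kappa)$ is open, and the lemma makes it dense; intersecting these sets over a countable book-keeping of the relevant parameters yields $\mathcal R$. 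For $\mu\in\mathcal R$ one then extracts, by a Moran-type recursion running down the scales $r_j$ and nesting the cheap balls of each level inside those of the previous one, a decreasing sequence $W_1\supseteq W_2\supseteq\cdots$ of unions of $\pi$-cheap $r_j$-balls with $\mu(W_j)\ge c$; the set $E:=\bigcap_j W_j$ then satisfies $\mu(E)\ge c>0$, while covering $E$ by the cheap $r_j$-balls forming $W_j$ gives $\npiq(E,r_j)\le N\big(\max_i\inf_{x\in B(y_i,\rho)}\pi(B(x,r_j))\big)^{q}$, so that $\dinfpiq(E)\le\liminf_j\frac{\log\npiq(E,r_j)}{-\log r_j}\le\beta+1/k$, as wanted.

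The step I expect to be the genuine obstacle is precisely this last Moran-type recursion, together with the reason the value cannot be lowered below $\Dsuppiunifmax(-\infty)$. Because no doubling hypothesis is available (in sharp contrast with Theorem~\ref{THMMAINBIG}), one cannot compare an $r$-cover with an $\rho$-cover, nor with a $2r$-cover; hence the whole construction must be organized around \emph{one} sequence of scales $r_j$ that simultaneously serves all the finitely many balls in play, and the nesting $W_{j+1}\subseteq W_j$ has to be fitted inside this rigid hierarchy of scales. Making the cheap configurations at successive levels mutually compatible is exactly what forces the ``$\inf$ over finite configurations $y_1,\dots,y_N$'' into $\Dsuppiunifmax(-\infty)$ (and, for $q\le0$, $\Dinfpiunifmin(+\infty)$); the elementary left-hand argument, on the other hand, only ever isolates a single ball of a cover, so it can reach no better than the smaller quantity $\Dsuppimax(-\infty)$. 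The gap between the two is what keeps the statement a two-sided estimate rather than an equality — unlike the big lower box dimension, which is pinned down exactly in Theorem~\ref{THMMAINSMALL}.
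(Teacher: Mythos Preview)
Your lower-bound argument is correct and in fact cleaner than the paper's: picking any $e\in E$ and keeping only the single term of a cover that contains $e$ already gives $\dinfpiq(E)\ge -q\Dsuppimax(-\infty)$ for \emph{every} $\mu$ and every $E$ with $\mu(E)>0$, so no genericity is needed on this side. The paper instead builds a dense $G_\delta$ set by perturbing around convex combinations of Diracs at a dense sequence $(y_n)$ and then uses that a generic $\nu$ charges $E\cap F_n(\rho_n)$ for arbitrarily large $n$; your one-line estimate makes that detour unnecessary.

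For the upper bound your localization to a near-optimal $B(z,\kappa)$ is the right first move, but the mechanism you propose has a real gap, and the paper does \emph{not} use a Moran recursion. Two concrete problems: (i) you freeze a single configuration $y_1,\dots,y_N$ and a single $\rho$, so that $W_j=\bigcup_{i\le N}B(x_i^{(j)},r_j)$ is a union of $N$ shrinking balls, and you give no construction of the nesting $W_{j+1}\subset W_j$ that you invoke; (ii) your perturbation lemma only secures $\mu(W_j)>c$ for a fixed $c\in(0,1)$, and without nesting this tells you nothing about $\mu\big(\bigcap_j W_j\big)$. The paper sidesteps both by letting the configuration \emph{grow} with $n$: it takes $(y_n)$ dense in $K\cap B(z,\kappa)$ and, for each $n$, applies the defining property of $\Dsuppiunifmax(-\infty)$ to $y_1,\dots,y_n$ and $\rho_n\to 0$ to obtain $r_n\le n^{-n}$ and points $x_i^n\in B(y_i,\rho_n)$ with $\pi(B(x_i^n,r_n))\le r_n^{t}$. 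The dense sets $\Lambda_n$ are convex combinations of the $\delta_{x_i^n}$ together with an arbitrary piece supported outside $B(z,\kappa+2r_n)$; they are calibrated so that any $\nu$ in the resulting $G_\delta$ satisfies, along a subsequence $(n_k)$, $\nu(F_{n_k})\ge(1-\veps_{n_k})^{2}\,\nu(B(z,\kappa))$ with $F_n=\bigcup_{i\le n}B(x_i^n,r_n)\subset B(z,\kappa)$. No nesting is required: since the deficits $\nu\big(B(z,\kappa)\setminus F_{n_k}\big)$ are summable, $G=\bigcap_k F_{n_k}$ has $\nu(G)>0$ directly, and covering $G$ by the $n_k$ balls of radius $r_{n_k}$ gives $\npiq(G,r_{n_k})\le n_k\, r_{n_k}^{qt}$, hence $\dinfpiq(G)\le -qt$.
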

\smallskip

Although they are not very engaging, the above quantities can be easily computed for regular measures $\pi$. This is for instance the case for
self-similar measures on self-similar compact sets. Fix an integer $M\geq 2$. For any $m=1,\dots,M$, let $S_m:\mathbb R^d\to\mathbb R^d$
be a contracting similarity with Lipschitz constant $r_m\in(0,1)$. Let $(p_1,\dots,p_M)$ be a probability vector. We define $K$ and $\pi$ as the self-similar 
compact set and the self-similar measure associated with the list $(S_1,\dots,S_M,p_1,\dots,p_M)$, i.e. $K$ is the unique nonempty compact subset of
$\mathbb R^d$ such that
$$K=\bigcup_m S_m(K),$$
and $\pi$ is the unique Borel probability measure on $\mathbb R^d$ such that
$$\pi=\sum_m p_m\pi\circ S_m^{-1}$$
(see for instance \cite{Fal97}). It is well known that $\supp\pi=K$. We say that the list $(S_1,\dots,S_M)$ satisfies the \emph{Open Set Condition} if there exists an
open and nonempty bounded subset $U$ of $\mathbb R^d$ with $S_mU\subset U$ for all $m$ and $S_m U\cap S_l U=\varnothing$ for all $l,m$ with $l\neq m$.

Theorem \ref{THMMAINSMALL} and Theorem \ref{THMMAINSMALLBIS} imply the following more appealing corollary:
\begin{corollary}\label{CORSELFSIMILAR}
Let $K$ and $\pi$ as above, and assume that the Open Set Condition is satisfied. Let 
$$s_{\min}=\min_m \frac{\log p_m}{\log r_m}\textrm{ and }s_{\max}=\max_m \frac{\log p_m}{\log r_m}.$$
Then a typical measure $\mu\in\pk$ satisfies
$$\dinfsmallpiq(\mu)=\dinfbigpiq(\mu)=\begin{cases}
-s_{\max}q&\textrm{ for any }q\geq 0\\
-s_{\min}q&\textrm{ for any }q\leq 0.
\end{cases}
$$
\end{corollary}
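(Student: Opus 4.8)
The plan is to derive the corollary from Theorems~\ref{THMMAINSMALL} and~\ref{THMMAINSMALLBIS} by checking that, for a self-similar pair $(K,\pi)$ satisfying the Open Set Condition, every moment-scaling quantity occurring in those two statements collapses to $s_{\min}$ or $s_{\max}$; explicitly, I would establish
$$\Dsuppiunif(-\infty)=\Dsuppiunifmax(-\infty)=\Dsuppimax(-\infty)=s_{\max}$$
$$\Dinfpiunif(+\infty)=\Dinfpiunifmin(+\infty)=\Dinfpimin(+\infty)=s_{\min}.$$
Granting these, Theorem~\ref{THMMAINSMALL} gives, for a typical $\mu\in\pk$, $\dinfbigpiq(\mu)=-qs_{\max}$ when $q\geq 0$ and $\dinfbigpiq(\mu)=-qs_{\min}$ when $q\leq 0$; and in Theorem~\ref{THMMAINSMALLBIS} the two bounds enclosing $\dinfsmallpiq(\mu)$ then coincide (with value $-qs_{\max}$, resp. $-qs_{\min}$), so that $\dinfsmallpiq(\mu)=\dinfbigpiq(\mu)$ takes the asserted value. (As elsewhere, the infima and suprema over $x$ ranging in a ball $B(y,\rho)$ are understood over $B(y,\rho)\cap K$.)

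Two classical properties of self-similar measures under the OSC do all the work. Write $p_w=p_{i_1}\cdots p_{i_n}$ and $r_w=r_{i_1}\cdots r_{i_n}$ for a word $w=i_1\cdots i_n$. First, since $\pi(S_w(K))=p_w$ and the cylinders enjoy the bounded-overlap property, there exist $0<c\leq C$ and $\kappa\in\NN$ with, for all $x\in K$ and $0<r<\operatorname{diam}K$,
$$c\,\max_w p_w\ \leq\ \pi\big(B(x,r)\big)\ \leq\ C\,\max_w p_w,$$
the maximum being over a set of at most $\kappa$ words $w$ with $r_w$ comparable to $r$. Since $\log p_w/\log r_w$ is a convex combination of the numbers $\log p_m/\log r_m$, hence lies in $[s_{\min},s_{\max}]$, and since $\log r_w=\log r+O(1)$, this produces the \emph{uniform} estimate
$$s_{\min}+o(1)\ \leq\ \frac{\log\pi\big(B(x,r)\big)}{\log r}\ \leq\ s_{\max}+o(1)\qquad(r\to 0),$$
valid uniformly in $x\in K$. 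Second, pick $m_*$ and $m^*$ with $s_{\min}=\log p_{m_*}/\log r_{m_*}$ and $s_{\max}=\log p_{m^*}/\log r_{m^*}$. For an arbitrary finite word $w$, the point $x\in K$ with symbolic expansion $w\,m^*m^*m^*\cdots$ has an \emph{exact} local dimension $\lim_{r\to 0}\log\pi(B(x,r))/\log r=s_{\max}$: its code is eventually the constant letter $m^*$, so the comparison applies at the scales $r_w r_{m^*}^{k}$ with $p_{wm^*\cdots m^*}=p_w p_{m^*}^{k}$, $r_{wm^*\cdots m^*}=r_w r_{m^*}^{k}$, driving the ratio to $s_{\max}$, and the intermediate scales are squeezed likewise. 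Symmetrically, $w\,m_*m_*\cdots$ has exact local dimension $s_{\min}$. Finally, every ball $B(y,\rho)$ with $y\in K$ contains a whole cylinder $S_w(K)$, hence points of both types, so points of exact local dimension $s_{\max}$ (resp.\ $s_{\min}$) are dense in $K$.

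Granted this, the six equalities are routine manipulations of $\liminf$ and $\limsup$. The uniform estimate gives at once $\Dsuppiunif(-\infty),\Dsuppiunifmax(-\infty),\Dsuppimax(-\infty)\leq s_{\max}$ and $\Dinfpiunif(+\infty),\Dinfpiunifmin(+\infty),\Dinfpimin(+\infty)\geq s_{\min}$. For the reverse inequality $\Dsuppimax(-\infty)\geq s_{\max}$: every $B(y,\rho)$ contains a point $x$ with $\limsup_{r\to 0}\log\pi(B(x,r))/\log r=s_{\max}$, so the inner $\limsup$ defining $\Dsuppimax(-\infty)$ is already $\geq s_{\max}$. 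For $\Dsuppiunif(-\infty)\geq s_{\max}$: given a finite family $y_1,\dots,y_N\in K$ and $\rho>0$, choose in each $B(y_i,\rho)$ a point $x_i$ of the form $w_i\,m^*m^*\cdots$, for which $\log\pi(B(x_i,r))/\log r\to s_{\max}$ as a genuine limit; then the \emph{finite} infimum $\inf_i\log\pi(B(x_i,r))/\log r$ converges to $s_{\max}$, forcing the relevant $\limsup$ to be $\geq s_{\max}$. And $\Dsuppiunifmax(-\infty)\geq\Dsuppiunif(-\infty)=s_{\max}$, since the former infimises the same expression over configurations confined to a small ball (fewer configurations, larger infimum) and then supremises over balls. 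The three $(+\infty)$-quantities are handled identically, interchanging $\sup$ with $\inf$, $\limsup$ with $\liminf$, $s_{\max}$ with $s_{\min}$ and $m^*$ with $m_*$, and using that a finite $\sup$ of sequences each tending to $s_{\min}$ still tends to $s_{\min}$.

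The crux — and essentially the only place where content beyond bookkeeping enters — is the second property above: that inside \emph{every} cylinder of $K$ there sit points whose local dimension equals $s_{\max}$ (resp.\ $s_{\min}$) \emph{as a true limit}, not merely as an upper (resp.\ lower) limit. This exactness is precisely what allows the finite infima and suprema in the definitions of $\Dsuppiunif(-\infty)$, $\Dinfpiunif(+\infty)$, $\Dinfpiunifmin(+\infty)$ and $\Dsuppiunifmax(-\infty)$ to be exchanged harmlessly with the $\liminf/\limsup$ in $r$. Pinning it down rigorously rests on the detailed local structure of $\pi$ near the fixed points of the maps $S_m$ under the OSC; this is standard but must be written out with care. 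Everything else reduces to the elementary observation that $\log p_w/\log r_w$ is a weighted average of the $\log p_m/\log r_m$, together with the uniform comparison of $\pi(B(x,r))$ with the numbers $p_w$.
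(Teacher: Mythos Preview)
Your proposal is correct and follows essentially the same route as the paper: reduce the corollary to the equalities
\[
\Dsuppiunif(-\infty)=\Dsuppiunifmax(-\infty)=\Dsuppimax(-\infty)=s_{\max},\qquad
\Dinfpiunif(+\infty)=\Dinfpiunifmin(+\infty)=\Dinfpimin(+\infty)=s_{\min},
\]
and prove these by combining (i) a uniform two-sided bound $s_{\min}+o(1)\le \log\pi(B(x,r))/\log r\le s_{\max}+o(1)$ with (ii) the existence, inside every cylinder, of points with code eventually constant equal to the extremal letter. The only technical difference is that the paper, for the bound $\Dsuppiunif(-\infty)\ge s_{\max}$, works along the explicit scale sequence $r=r_l^{k+n}$ and uses only the one-sided estimate $\pi\big(B(x_i,r_{\overline{\mathbf m_k}})\big)\le p_{\overline{\mathbf m_k}}$, whereas you invoke the stronger fact that $\log\pi(B(x_i,r))/\log r\to s_{\max}$ as a genuine limit at such points; both arguments are standard under the OSC, and the paper likewise outsources the uniform upper bound to \cite{Pat97} rather than writing out your cylinder comparison.
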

This improves Theorem 2.1 of \cite{OL11} which just says that a typical $\mu\in\pk$ satisfies
$$-s_{\max}q\leq \dinfsmallpiq(\mu)\leq \dinfbigpiq(\mu)\leq -s_{\min }q\textrm{ for all }q\geq 0$$
$$-s_{\min}q\leq \dinfsmallpiq(\mu)\leq \dinfbigpiq(\mu)\leq -s_{\max }q\textrm{ for all }q\leq 0.$$
\subsection{Organization of the paper}
The paper is organized as follows. In Section 2, we summarize all the results which will be needed throughout the paper.
Section 3 is devoted to the proof of Theorem \ref{THMMAINBIG}. The proofs of Theorems  \ref{THMMAINSMALL}
and \ref{THMMAINSMALLBIS} share some similarities. They will be exposed in Section 4, as well as the application
to self-similar measures.

\section{Preliminaries} \label{SECPRELIMINARIES}

Throughout this paper, $\mathcal P(K)$ will be endowed with the weak topology. It is well known (see for instance \cite{Par67})
that this topology is completely metrizable by the Fortet-Mourier distance defined as follows. Let $\textrm{Lip}(K)$ denote the family 
of Lipschitz functions $f:K\to\mathbb R$, with $|f|\leq 1$ and $\textrm{Lip}(f)\leq 1$, where $\textrm{Lip}(f)$ denotes
the Lipschitz constant of $f$. The metric $L$ is defined by
$$L(\mu,\nu)=\sup_{f\in\textrm{Lip}(K)}\left|\int fd\mu-\int fd\nu\right|$$
for any $\mu,\nu\in\mathcal P(K)$. We endow $\mathcal P(K)$ with the metric $L$. In particular, for $\mu\in\mathcal P(K)$
and $\delta>0$, $B_{L}(\mu,\delta)=\{\nu\in\mathcal P(K);\ L(\mu,\nu)<\delta\}$ will stand for the ball with center at $\mu$ and radius
equal to $\delta$.

We shall use several times the following lemma.
\begin{lemma}\label{LEMTOPO1}
 For any $\alpha\in(0,1)$, for any $\beta>0$, there exists $\eta>0$ such that, for any $E$ a Borel subset of $K$, for any
$\mu,\nu\in\mathcal P(K)$, 
$$L(\mu,\nu)<\eta\implies \mu(E)\leq\nu\big(E(\alpha)\big)+\beta,$$
where $E(\alpha)=\{x\in K;\ \textrm{dist}(x,E)<\alpha\}$.
\end{lemma}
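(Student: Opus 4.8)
The plan is to approximate the Lipschitz-type test against the indicator of $E(\alpha)$ by an honest element of $\textrm{Lip}(K)$. Given $\alpha\in(0,1)$ and $\beta>0$, choose a small parameter $\lambda\in(0,\alpha)$ to be fixed later, and define $f:K\to\RR$ by
$$f(x)=\max\left(0,\,1-\frac1\lambda\,\textrm{dist}(x,E)\right).$$
Then $f$ is $\frac1\lambda$-Lipschitz, takes values in $[0,1]$, equals $1$ on $E$, and is supported on $\{x\in K:\textrm{dist}(x,E)<\lambda\}\subset E(\alpha)$. The rescaled function $\lambda f$ lies in $\textrm{Lip}(K)$. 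Hence for any $\mu,\nu\in\mathcal P(K)$ with $L(\mu,\nu)<\eta$ we get
$$\left|\int \lambda f\,d\mu-\int\lambda f\,d\nu\right|\le L(\mu,\nu)<\eta,$$
so that $\int f\,d\mu\le\int f\,d\nu+\eta/\lambda$.

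Now bound each side: since $f\ge \mathbf 1_E$ pointwise, $\int f\,d\mu\ge\mu(E)$; and since $0\le f\le\mathbf 1_{E(\alpha)}$ pointwise, $\int f\,d\nu\le\nu\big(E(\alpha)\big)$. Combining,
$$\mu(E)\le\nu\big(E(\alpha)\big)+\frac\eta\lambda.$$
It therefore suffices to fix any $\lambda\in(0,\alpha)$ (for instance $\lambda=\alpha/2$) and then take $\eta=\lambda\beta$; the implication $L(\mu,\nu)<\eta\Rightarrow\mu(E)\le\nu(E(\alpha))+\beta$ holds for every Borel $E\subset K$, with $\eta$ depending only on $\alpha$ and $\beta$ and not on $E$.

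The only mild subtlety — and the one point worth checking carefully — is that $f$ must genuinely be admissible in the supremum defining $L$, i.e. that $\lambda f$ is globally $1$-Lipschitz on $K$ with $|\lambda f|\le1$; both are immediate from $x\mapsto\textrm{dist}(x,E)$ being $1$-Lipschitz and from $0\le f\le1$. There is no real obstacle here: the argument is a routine mollification of an indicator by a distance function, and the uniformity in $E$ comes for free because the Lipschitz constant of $f$ depends only on $\lambda$, not on the set $E$.
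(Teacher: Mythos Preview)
Your proof is correct and is essentially the same argument as the paper's: the paper defines $f(x)=\max(0,\alpha-\textrm{dist}(x,E))$, which is exactly your $\lambda f$ with the choice $\lambda=\alpha$, and then proceeds identically to obtain $\mu(E)\le\nu(E(\alpha))+\eta/\alpha$ and $\eta=\alpha\beta$. Your introduction of a separate parameter $\lambda<\alpha$ is harmless but unnecessary---taking $\lambda=\alpha$ works just as well since $f(x)>0$ already forces $\textrm{dist}(x,E)<\alpha$, i.e.\ $x\in E(\alpha)$.
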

\begin{proof}
 We set
$$f(t)=\left\{
\begin{array}{ll}
 \alpha&\textrm{provided }t\in \overline E\\
\alpha-\textrm{dist}(x,E)&\textrm{provided }0<\textrm{dist}(x,E)\leq\alpha\\
0&\textrm{otherwise}.
\end{array}\right.$$
Then $f$ is Lipschitz, with $|f|\leq 1$ and $\textrm{Lip}(f)\leq 1$. Thus, provided $L(\mu,\nu)<\eta$,
\begin{eqnarray*}
 \mu(E)&\leq&\frac 1\alpha\int fd\mu\\
&\leq&\frac 1\alpha \left[\int fd\nu+\eta\right]\\
&\leq&\nu \big(E(\alpha)\big)+\frac\eta\alpha.
\end{eqnarray*}
Hence, it suffices to take $\eta=\alpha\beta$.
\end{proof}

An application of Lemma \ref{LEMTOPO1} is the following result on open subsets of $\pk$:
\begin{lemma}\label{LEMTOPOOPEN}
 Let $x\in K$, $a\in\mathbb R$ and $r>0$. Then $\{\mu\in\pk;\ \mu\big(B(x,r)\big)>a\}$ is open.
\end{lemma}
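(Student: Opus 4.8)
The plan is to fix $x\in K$, $a\in\mathbb R$, $r>0$ and show that the set $U=\{\mu\in\pk;\ \mu\big(B(x,r)\big)>a\}$ contains a ball $B_L(\mu_0,\delta)$ around each of its points $\mu_0$. The only subtlety is that $B(x,r)$ is an open ball, so the portmanteau theorem only gives lower semicontinuity of $\mu\mapsto\mu\big(B(x,r)\big)$ directly, which is exactly what we want — but I will argue it quantitatively through Lemma \ref{LEMTOPO1} so as to keep everything in terms of the Fortet--Mourier metric $L$.

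Fix $\mu_0\in U$, so $\mu_0\big(B(x,r)\big)>a$. Choose a real number $r'<r$ with $\mu_0\big(B(x,r')\big)>a$ as well; this is possible because $B(x,r)=\bigcup_{r'<r}B(x,r')$ and measures are continuous along increasing unions, so $\mu_0\big(B(x,r')\big)\to\mu_0\big(B(x,r)\big)$ as $r'\uparrow r$. Set $\beta=\tfrac12\big(\mu_0\big(B(x,r')\big)-a\big)>0$ and $\alpha=\tfrac12(r-r')>0$; note $\alpha<1$ may be assumed by shrinking if necessary (if $r-r'\ge 2$ just replace $r'$ by a value closer to $r$). Apply Lemma \ref{LEMTOPO1} with these $\alpha$ and $\beta$ to get the corresponding $\eta>0$, and put $\delta=\eta$.

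Now suppose $L(\mu_0,\nu)<\delta$. Apply Lemma \ref{LEMTOPO1} with $E=B(x,r')$, with the roles of $\mu$ and $\nu$ played by $\mu_0$ and $\nu$ respectively: since $\big(B(x,r')\big)(\alpha)\subset B(x,r'+\alpha)\subset B(x,r)$, we get
$$\nu\big(B(x,r)\big)\ \geq\ \nu\Big(\big(B(x,r')\big)(\alpha)\Big)\ \geq\ \mu_0\big(B(x,r')\big)-\beta\ >\ a.$$
Hence $\nu\in U$, so $B_L(\mu_0,\delta)\subset U$ and $U$ is open.

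I do not expect any real obstacle here; the statement is essentially lower semicontinuity of $\mu\mapsto\mu\big(B(x,r)\big)$ on an open ball, and the only point requiring a moment's care is that we cannot apply Lemma \ref{LEMTOPO1} to $E=B(x,r)$ directly (its $\alpha$-neighbourhood sticks out of $B(x,r)$), which is circumvented by the elementary step of retreating to a slightly smaller radius $r'$ on which $\mu_0$ still has mass exceeding $a$.
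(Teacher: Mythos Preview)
Your proof is correct and follows essentially the same approach as the paper's: retreat to a slightly smaller radius where the mass still exceeds $a$, then apply Lemma~\ref{LEMTOPO1} with that smaller ball as $E$ and $\alpha$ chosen to fit in the gap. The paper parameterizes the smaller radius as $(1-\veps)r$ and sets $\alpha=\veps r$, but this is only a cosmetic difference from your choice of $r'$ and $\alpha=\tfrac12(r-r')$; you are also a bit more explicit about ensuring $\alpha<1$, which the paper leaves implicit.
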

\begin{proof}
  If $a$ does not belong to $[0,1)$, then the set is either empty or equal to $\pk$. Otherwise, let $\mu\in\pk$ be such that $\mu\big(B(x,r)\big)>a$. One may find $\veps>0$ such that 
$\mu\big(B(x,(1-\veps)r)\big)>a$. Thus the result follows from Lemma \ref{LEMTOPO1} applied with $E=B(x,(1-\veps)r)$,
$\alpha=\veps r$ and $\beta=\big(\mu\big(B(x,(1-\veps)r)\big)-a\big)/2$.
\end{proof}

Finally, we will need that some subsets of $\pk$ are dense in $\pk$. The result that we need can be found e.g. in \cite[Lemma 2.2.4.]{OL05}.
\begin{lemma}\label{LEMDENS1}
 Let $(x_n)_{n\geq 1}$ be a dense sequence of $K$ and let $(\eta_n)_{n\geq 1}$ be a sequence of positive real numbers going to zero. For each $n\geq 1$ and each $i\in\{1,\dots,n\}$, let $\mu_{n,i}\in \mathcal P(K)$ be such that
$\supp(\mu_{n,i})\subset K\cap B(x_i,\eta_n)$. Then, for any $m\geq 1$, $\bigcup_{n\geq m}\big\{\sum_{i=1}^n p_i\mu_{n,i};\ p_i> 0,\ \sum_i p_i=1\big\}$
is dense in $\pk$.
\end{lemma}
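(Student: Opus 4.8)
The plan is to fix an arbitrary $\mu\in\pk$ and $\delta>0$ and to produce some $n\geq m$ together with weights $p_1,\dots,p_n>0$, $\sum_i p_i=1$, for which $\nu:=\sum_{i=1}^n p_i\mu_{n,i}$ satisfies $L(\mu,\nu)<\delta$. First I would pick a small parameter $\alpha>0$, to be fixed at the very end, and choose $n\geq m$ so large that \emph{both} $\eta_n<\alpha$ and $K\subset\bigcup_{i=1}^n B(x_i,\alpha)$ hold. The first condition holds for all large $n$ because $\eta_n\to 0$; the second holds for all large $n$ because $K$ is compact and $(x_k)_{k\geq 1}$ is dense, so finitely many of the balls $B(x_k,\alpha)$ already cover $K$ and their centers all appear among $x_1,\dots,x_n$ once $n$ is large.

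Next I would decompose $K$ into a Borel partition adapted to these balls by setting $A_1=K\cap B(x_1,\alpha)$ and $A_i=\big(K\cap B(x_i,\alpha)\big)\setminus\big(A_1\cup\dots\cup A_{i-1}\big)$ for $2\leq i\leq n$; then $A_1,\dots,A_n$ partition $K$, and both $A_i$ and $\supp(\mu_{n,i})\subset B(x_i,\eta_n)$ are contained in $B(x_i,\alpha)$. To force the weights to be strictly positive (note that some $A_i$ may be $\mu$-null, or even empty) I would not take $p_i=\mu(A_i)$ but rather $p_i=(1-\gamma)\mu(A_i)+\gamma/n$ for a second small parameter $\gamma>0$; this gives $p_i>0$, $\sum_i p_i=1$, and $\sum_i\big|\mu(A_i)-p_i\big|\leq 2\gamma$.

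Then I would bound $L(\mu,\nu)$ directly. Given any $f:K\to\RR$ with $|f|\leq 1$ and $\textrm{Lip}(f)\leq 1$, I write $\int f\,d\mu=\sum_{i=1}^n\int_{A_i}f\,d\mu$ and, on each cell, replace $f$ by the constant $f(x_i)$; since $f$ deviates from $f(x_i)$ by at most $\alpha$ on $B(x_i,\alpha)$ (which contains both $A_i$ and $\supp(\mu_{n,i})$), and using $\sum_i|\mu(A_i)-p_i|\leq 2\gamma$, this produces
$$\Big|\int f\,d\mu-\int f\,d\nu\Big|\ \leq\ \sum_{i=1}^n\Big(\alpha\,\mu(A_i)+\big|\mu(A_i)-p_i\big|+\alpha\,p_i\Big)\ \leq\ 2\alpha+2\gamma .$$
Taking the supremum over such $f$ gives $L(\mu,\nu)\leq 2\alpha+2\gamma$, so it suffices to have chosen $\alpha$ and $\gamma$ with $2\alpha+2\gamma<\delta$; then $\nu$ belongs to $\big\{\sum_{i=1}^n p_i\mu_{n,i};\ p_i>0,\ \sum_i p_i=1\big\}$ with $n\geq m$, which proves the density. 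I do not expect a real obstacle here: the only points requiring care are the simultaneous choice of $n\geq m$ meeting the two conditions above (which rests on the compactness of $K$) and the $\gamma$-perturbation keeping all weights positive. Alternatively the closeness of $\mu$ and $\nu$ could be extracted from Lemma \ref{LEMTOPO1} applied to the sets $A_i$, but the explicit test-function computation is shorter.
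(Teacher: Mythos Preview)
Your argument is correct. The partition $A_1,\dots,A_n$ of $K$ subordinate to the balls $B(x_i,\alpha)$, the perturbed weights $p_i=(1-\gamma)\mu(A_i)+\gamma/n$, and the three-term estimate on $\big|\int f\,d\mu-\int f\,d\nu\big|$ all work exactly as you describe, and the final bound $L(\mu,\nu)\leq 2\alpha+2\gamma$ is sharp enough to conclude.

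There is nothing to compare against, however: the paper does not prove this lemma but simply cites \cite[Lemma 2.2.4]{OL05}. Your proposal therefore supplies a short, self-contained proof where the paper only gives a reference. The standard argument in Olsen's paper proceeds along similar lines (approximation by finitely supported measures near the $x_i$), so your approach is in the expected spirit; the $\gamma$-perturbation to guarantee strict positivity of the weights is the one point that must be handled with some care, and you do it cleanly.
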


\section{The typical upper multifractal box dimensions}
This section is devoted to the proof of Theorem \ref{THMMAINBIG}
\subsection{Packing, covering and doubling measures}
When $\pi$ is a doubling measure, it will be convenient to express the multifractal box dimensions of a set using packings instead of coverings. For $E\subset\mathbb R^d$, recall that a family of balls $\big(B(x_i,r)\big)$ is called a \emph{centred packing} of $E$ if $x_i\in E$ for all $i$ and $|x_i-x_j|>2$ for all $i\neq j$.
We then define
$$\ppiq(E,r)=\sup_{(B(x_i,r))\textrm{ is a packing of }E}\sum_i \pi\big(B(x_i,r)\big)^q.$$
When $\pi$ is a doubling measure, $\dinfpiq(E)$ and $\dsuppiq(E)$ can be defined using packings (see \cite{OL11}):
\begin{lemma}
Let $\pi$ be a doubling Borel probability measure on $\mathbb R^d$ with support $K$. Then
\begin{eqnarray*}
\dinfpiq(E)&=&\liminf_{r\to 0}\frac{\log \ppiq(E)}{-\log r}\\
\dsuppiq(E)&=&\limsup_{r\to 0}\frac{\log \ppiq(E)}{-\log r}
\end{eqnarray*}
for all $E\subset K$ and all $q\in\mathbb R$.
\end{lemma}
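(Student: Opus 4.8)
The plan is to show that $\ppiq(E,r)$ and $\npiq(E,r)$ are comparable up to a multiplicative constant depending only on the doubling constant $C$ and on $q$, after allowing a bounded change of scale on one side; since such a constant and such a scale change affect neither $\liminf_{r\to0}$ nor $\limsup_{r\to0}$ of $\log(\cdot)/(-\log r)$, and $\dinfpiq(E)$, $\dsuppiq(E)$ are by definition exactly these lower and upper limits for $\npiq$, this is all that is needed. Because $t\mapsto t^q$ is increasing for $q\geq0$ and decreasing for $q\leq0$, I would run the estimates separately for the two sign ranges, being careful to apply each ball inclusion and each instance of the doubling inequality in the direction compatible with the sign of $q$. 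I would also use throughout that all centers occurring below lie in $K=\supp\pi$, so that balls centred there have positive $\pi$-measure — this is what keeps the quotients, and the powers when $q<0$, meaningful, and it forces one first to reduce to covering families whose balls are centred in $E$ (which doubling permits at the cost of one more constant); one may assume $C\geq1$.

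For the first inequality, $\ppiq(E,r)\leq C^{|q|}\npiq(E,r)$, I would take an arbitrary centred packing $(B(x_i,r))_i$ of $E$ and an arbitrary cover $(B(y_j,r))_j$ of $E$, and assign to each $i$ an index $j(i)$ with $x_i\in B(y_{j(i)},r)$. Since the packing centers are pairwise more than $2r$ apart while the covering balls have radius $r$, no covering ball can contain two packing centers, so $i\mapsto j(i)$ is injective. The inclusions $B(x_i,r)\subset B(y_{j(i)},2r)$ and $B(y_{j(i)},r)\subset B(x_i,2r)$ together with doubling give $\pi(B(x_i,r))\leq C\,\pi(B(y_{j(i)},r))$ and $\pi(B(y_{j(i)},r))\leq C\,\pi(B(x_i,r))$; raising the appropriate one of these to the power $q$ and summing over $i$, using injectivity of $j$, yields $\sum_i\pi(B(x_i,r))^q\leq C^{|q|}\sum_j\pi(B(y_j,r))^q$, and then one passes to the supremum over packings and the infimum over covers.

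For the reverse inequality, $\npiq(E,3r)\leq C^{2|q|}\ppiq(E,r)$, I would start from a maximal centred packing $(B(x_i,r))_i$ of $E$ (finite, since $E$ is bounded); by maximality every point of $E$ lies within $2r$ of some $x_i$, so $(B(x_i,3r))_i$ covers $E$ and $\npiq(E,3r)\leq\sum_i\pi(B(x_i,3r))^q$. Since $\pi(B(x_i,r))\leq\pi(B(x_i,3r))\leq C^2\pi(B(x_i,r))$ (doubling applied twice), the power estimate matched to the sign of $q$ gives $\pi(B(x_i,3r))^q\leq C^{2|q|}\pi(B(x_i,r))^q$, hence $\npiq(E,3r)\leq C^{2|q|}\ppiq(E,r)$. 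Taking logarithms in the two inequalities, dividing by $-\log r$, and using that the constant factors contribute a term tending to $0$ while $-\log(3r)/(-\log r)\to1$ and $3r\to0$ as $r\to0$, one recovers $\liminf_{r\to0}\log\ppiq(E,r)/(-\log r)=\dinfpiq(E)$ and the analogous identity for the $\limsup$.

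This is a standard doubling-measure comparison, so I do not expect a genuine obstacle; the points that most repay care are the consistent matching of the direction of the inequalities to the sign of $q$, the scale shift from $r$ to $3r$ in the second step (together with the remark that it leaves the limits unchanged), and, when $q<0$, the reduction to covering families all of whose balls carry positive $\pi$-mass.
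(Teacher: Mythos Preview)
Your argument is correct and is the standard doubling-measure comparison between covering and packing sums; the paper itself gives no proof at all, simply citing \cite{OL11} for this lemma. Your sketch therefore supplies more than the paper does, and the two minor technical points you flag (handling cover centers possibly outside $K$, and the harmless scale shift $r\mapsto 3r$) are exactly the places where care is needed.
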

One of the advantages of using packings instead of covering is that it helps us to obtain regularity of the map $q\mapsto \dsuppiq(E)$, as shown in the following lemma.
\begin{lemma}
Let $\pi$ be a doubling Borel probability measure on $\mathbb R^d$ with support $K$, and let $E\subset K$. 
\begin{enumerate}
\item The map $q\mapsto \dsuppiq(E)$ is nonincreasing, convex and therefore continuous.
\item The maps $q\mapsto \taupiloc(q)$ and $q\mapsto\taupilocmax(q)$ are nonincreasing.
\end{enumerate}
\end{lemma}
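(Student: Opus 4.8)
The plan is to push everything through the packing quantities $\ppiq(E,r)$, which compute $\dsuppiq(E)$ because $\pi$ is doubling (the previous lemma), establishing the structural properties of $q\mapsto\dsuppiq(E)$ first and then reading off the statements about $\taupiloc$ and $\taupilocmax$. Fix a nonempty $E\subseteq K$ and $0<r<1$. Since $\pi$ is a probability measure, $\pi\big(B(x_i,r)\big)\in(0,1]$ for every ball; hence for a \emph{fixed} centred packing $\big(B(x_i,r)\big)_i$ of $E$ the finite sum $q\mapsto\sum_i\pi\big(B(x_i,r)\big)^q$ is nonincreasing, and taking the supremum over packings shows $q\mapsto\ppiq(E,r)$ is nonincreasing. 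Therefore $q\mapsto\frac{\log\ppiq(E,r)}{-\log r}$ is nonincreasing (the denominator is positive for $r<1$), and so is its $\limsup$ as $r\to0$, which is $\dsuppiq(E)$.

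For convexity, fix $\theta\in(0,1)$ (the endpoints $\theta\in\{0,1\}$ are trivial) and reals $q_1,q_2$, and set $q_0=\theta q_1+(1-\theta)q_2$. For a fixed centred packing $\big(B(x_i,r)\big)_i$ of $E$, Hölder's inequality with conjugate exponents $1/\theta$ and $1/(1-\theta)$, applied to $\sum_i\pi\big(B(x_i,r)\big)^{\theta q_1}\pi\big(B(x_i,r)\big)^{(1-\theta)q_2}$, gives
\begin{align*}
\sum_i\pi\big(B(x_i,r)\big)^{q_0}&\leq\Big(\sum_i\pi\big(B(x_i,r)\big)^{q_1}\Big)^{\theta}\Big(\sum_i\pi\big(B(x_i,r)\big)^{q_2}\Big)^{1-\theta}\\
&\leq\mathbf{P}_\pi^{q_1}(E,r)^{\theta}\,\mathbf{P}_\pi^{q_2}(E,r)^{1-\theta},
\end{align*}
where it is essential that the two right-hand sums be estimated along the \emph{same} packing before any supremum is taken. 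Taking the supremum over packings of $E$ on the left gives $\mathbf{P}_\pi^{q_0}(E,r)\leq\mathbf{P}_\pi^{q_1}(E,r)^{\theta}\mathbf{P}_\pi^{q_2}(E,r)^{1-\theta}$; applying $\log$, dividing by $-\log r>0$, and using that $\limsup$ is subadditive and positively homogeneous yields $\overline{\dim}_{\pi,B}^{q_0}(E)\leq\theta\,\overline{\dim}_{\pi,B}^{q_1}(E)+(1-\theta)\,\overline{\dim}_{\pi,B}^{q_2}(E)$, i.e. convexity. (The same two computations, carried out with coverings and $\npiq$, also give monotonicity and convexity of $q\mapsto\dinfpiq(E)$ without any doubling assumption; doubling is used below only to get continuity.)

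Given convexity, continuity of $q\mapsto\dsuppiq(E)$ on $\RR$ follows once we know this map is real-valued. Here the doubling hypothesis enters: since $K$ is compact and $\pi$ doubling, iterating the doubling inequality downward from the ball $B(x,2\,\mathrm{diam}(K))$ (which has $\pi$-measure $1$ for $x\in K$) produces constants $c>0$, $s_0\geq0$ with $\pi\big(B(x,r)\big)\geq c\,r^{s_0}$ for all $x\in K$ and all $0<r\leq\mathrm{diam}(K)$; and a volume comparison bounds the number of balls in a centred packing of the bounded set $E$ by $N_0 r^{-d}$. Combining these with $\pi(B)\leq1$ and with one-ball packings traps $\ppiq(E,r)$ between two constant multiples of powers of $r$ with exponents affine in $q$ — e.g. $c^q r^{s_0 q}\leq\ppiq(E,r)\leq N_0 r^{-d}$ for $q\geq0$, and $1\leq\ppiq(E,r)\leq N_0 c^q r^{-d+s_0 q}$ for $q\leq0$ — so $-\infty<\dsuppiq(E)<+\infty$. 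A finite convex function on $\RR$ is continuous, which proves (1).

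Part (2) is then immediate by passing monotonicity through infima and suprema. The sets $K\cap B(x,r)$ ($x\in K$) are nonempty, so $\taupiloc(q)=\inf_{x\in K}\inf_{r>0}\dsuppiq\big(K\cap B(x,r)\big)$ is an infimum of maps $q\mapsto\dsuppiq\big(K\cap B(x,r)\big)$ that are nonincreasing by (1), and an infimum of nonincreasing functions is nonincreasing. Similarly $\taupilocmax(q)=\sup_{y\in K,\rho>0}\dsuppilocq\big(K\cap B(y,\rho)\big)$, and each term $\dsuppilocq\big(K\cap B(y,\rho)\big)$ is itself an infimum of maps $q\mapsto\dsuppiq\big(K\cap B(y,\rho)\cap B(x,r)\big)$, hence nonincreasing; a supremum of nonincreasing functions is nonincreasing, so $\taupilocmax$ is nonincreasing. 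The only steps that need any care are the Hölder estimate — one must work along a common packing before passing to the supremum — and the finiteness of $\dsuppiq(E)$, without which convexity would not force continuity; everything else is routine manipulation of $\limsup$.
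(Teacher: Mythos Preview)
Your argument is correct and is the standard proof; the paper itself simply cites \cite{OL11} for Part~(1) and calls Part~(2) trivial, so you have supplied the details the paper omits. The monotonicity from $\pi(B)\in(0,1]$, the convexity via H\"older along a common packing, and the finiteness bound coming from doubling (which upgrades convexity to continuity) are exactly the expected ingredients.

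One caveat about your parenthetical aside: the claim that ``the same two computations, carried out with coverings and $\npiq$,'' give convexity of $q\mapsto\dinfpiq(E)$ is not justified by the argument you wrote. For $\npiq$ one takes an \emph{infimum} over covers, and while H\"older yields $S_{q_0}(\mathcal C)\le S_{q_1}(\mathcal C)^{\theta}S_{q_2}(\mathcal C)^{1-\theta}$ for each fixed cover $\mathcal C$, taking the infimum over $\mathcal C$ on the right-hand side does not produce $\mathbf N_\pi^{q_1}(E,r)^{\theta}\mathbf N_\pi^{q_2}(E,r)^{1-\theta}$: infima of products of nonnegative functions satisfy $\inf(fg)\ge(\inf f)(\inf g)$, which is the wrong direction. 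This is exactly why packings (a supremum) are used in the proof of (1). The remark is tangential and does not affect your proof of the lemma itself.
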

\begin{proof}
Part (1) is Lemma 4.2 of \cite{OL11} and Part (2) is trivial.
\end{proof}
As a first application, we show that, in order to find a residual subset $\mathcal R$ of $\pk$ such that any $\mu\in\mathcal R$ satisfies
the conclusions of Theorem \ref{THMMAINBIG} for any $q\in\mathbb R$, it suffices to find a residual subset which works for a fixed $q\in\mathbb R$.
\begin{proposition}
Let $\pi$ be a doubling Borel probability measure on $\mathbb R^d$ with support $K$. Then there exists a countable set $\mathbf Q\subset\mathbb R$ such that 
\begin{eqnarray*}
\bigcap_{q\in\mathbb R}\big\{\mu\in\pk;\ \taupiloc(q)\leq\dsupsmallpiq(\mu)\big\}&=&\bigcap_{q\in\mathbf Q}\big\{\mu\in\pk;\ \taupiloc(q)\leq\dsupsmallpiq(\mu)\big\}\\
\bigcap_{q\in\mathbb R}\big\{\mu\in\pk;\ \taupiloc(q)\geq\dsupsmallpiq(\mu)\big\}&=&\bigcap_{q\in\mathbf Q}\big\{\mu\in\pk;\ \taupiloc(q)\geq\dsupsmallpiq(\mu)\big\}\\
\bigcap_{q\in\mathbb R}\big\{\mu\in\pk;\ \taupilocmax(q)\leq\dsupbigpiq(\mu)\big\}&=&\bigcap_{q\in\mathbf Q}\big\{\mu\in\pk;\ \taupilocmax(q)\leq\dsupbigpiq(\mu)\big\}\\
\bigcap_{q\in\mathbb R}\big\{\mu\in\pk;\ \taupilocmax(q)\geq\dsupbigpiq(\mu)\big\}&=&\bigcap_{q\in\mathbf Q}\big\{\mu\in\pk;\ \taupilocmax(q)\geq\dsupbigpiq(\mu)\big\}\end{eqnarray*}
\end{proposition}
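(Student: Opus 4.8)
The plan is to take $\mathbf Q$ to be any countable dense subset of $\mathbb R$ (for instance $\mathbf Q=\mathbb Q$; it need not depend on $\pi$) and to reduce all four equalities to one elementary fact: if $f,g\colon\mathbb R\to[-\infty,+\infty]$ are both nonincreasing and left-continuous and satisfy $f(q)\le g(q)$ for every $q$ in a dense set $D$, then $f\le g$ on all of $\mathbb R$. Indeed, for any $q_0\in\mathbb R$,
$$f(q_0)=\sup_{q<q_0,\ q\in D}f(q)\le\sup_{q<q_0,\ q\in D}g(q)=g(q_0),$$
the outer equalities coming from monotonicity together with left-continuity of $f$ and of $g$. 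Applying this with $(f,g)$ running over the four pairs $(\taupiloc,\dsupsmallpiq(\mu))$, $(\dsupsmallpiq(\mu),\taupiloc)$, $(\taupilocmax,\dsupbigpiq(\mu))$, $(\dsupbigpiq(\mu),\taupilocmax)$ then yields the four displayed identities at once (the inclusions $\subseteq$ being trivial since $\mathbf Q\subset\mathbb R$).

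Hence the real work is to show that, for every $\mu\in\pk$, each of the four functions of $q$ above is nonincreasing and left-continuous. I would first isolate two stability properties. (a) An infimum of an arbitrary family of continuous nonincreasing functions $\mathbb R\to[-\infty,+\infty]$ is nonincreasing and upper semicontinuous; and a nonincreasing upper semicontinuous function is automatically left-continuous, since its left limit at a point is $\ge$ its value by monotonicity and $\le$ its value by upper semicontinuity. (b) A supremum of an arbitrary family $(g_\alpha)$ of nonincreasing left-continuous functions is again nonincreasing and left-continuous: suprema commute, and $\sup_{q<q_0}g_\alpha(q)=g_\alpha(q_0)$ for each $\alpha$, so $\sup_{q<q_0}\sup_\alpha g_\alpha(q)=\sup_\alpha g_\alpha(q_0)$.

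Now I invoke the preceding lemma, which for $\pi$ doubling gives that $q\mapsto\dsuppiq(E)$ is continuous (being convex) and nonincreasing. By (a), $\taupiloc(q)=\inf_{x\in K,\,r>0}\dsuppiq(K\cap B(x,r))$ and $\dsupsmallpiq(\mu)=\inf_{\mu(E)>0}\dsuppiq(E)$ are nonincreasing and left-continuous; likewise, for each $F\subset K$, the quantity $\dsuppilocq(F)=\inf_{x\in F,\,r>0}\dsuppiq(F\cap B(x,r))$ is nonincreasing and left-continuous, so by (b) the same holds for $\taupilocmax(q)=\sup_{y\in K,\,\rho>0}\dsuppilocq(K\cap B(y,\rho))$. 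Finally, for each $\veps>0$ the function $q\mapsto\inf_{\mu(E)>1-\veps}\dsuppiq(E)$ is nonincreasing and left-continuous by (a) (the family $\{E:\mu(E)>1-\veps\}$ being nonempty, as it contains $K$), and since this quantity increases as $\veps\downarrow0$ we have $\dsupbigpiq(\mu)=\sup_{\veps>0}\inf_{\mu(E)>1-\veps}\dsuppiq(E)$, which is nonincreasing and left-continuous by (b).

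There is no genuine obstacle once these elementary facts are isolated; the one point deserving attention is that the common regularity of the four functions is left-continuity, not full continuity (an infimum of continuous monotone functions can fail to be continuous), so in the elementary fact one must approach $q_0$ strictly from the left along $\mathbf Q$ — which is harmless precisely because both sides of each inequality are left-continuous in $q$.
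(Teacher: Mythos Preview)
Your stability property (b) is false, and this is a genuine gap. A supremum of nonincreasing left-continuous functions need not be left-continuous: take $g_n(q)=1$ for $q\le -1/n$ and $g_n(q)=0$ for $q>-1/n$; each $g_n$ is nonincreasing and left-continuous, yet $g=\sup_n g_n$ equals $1$ on $(-\infty,0)$ and $0$ on $[0,+\infty)$, which fails left-continuity at $0$. The error in your justification of (b) is the same slip that appears in your proof of the elementary fact: for a \emph{nonincreasing} left-continuous function $h$ one has $\inf_{q<q_0}h(q)=h(q_0)$, not $\sup_{q<q_0}h(q)=h(q_0)$ (the latter would be correct for nondecreasing functions). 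Once you replace $\sup$ by $\inf$ in (b), the ``suprema commute'' step becomes an $\inf$--$\sup$ exchange, which is illegitimate, and indeed the counterexample shows the conclusion fails.

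Consequently your argument does not cover the third and fourth equalities: you have not shown that $q\mapsto\taupilocmax(q)$ or $q\mapsto\dsupbigpiq(\mu)$ is left-continuous, and there is no reason they should be. The paper avoids this by two devices. First, it enlarges $\mathbf Q$ to include the (countably many) discontinuity points of the monotone functions $\taupiloc$ and $\taupilocmax$, so that at any $q\notin\mathbf Q$ these functions are genuinely continuous. Second, for the fourth equality it does not compare $\dsupbigpiq(\mu)$ and $\taupilocmax$ directly as functions of $q$: instead it builds a single set $E=\bigcap_n E_n$ with $\mu(E)>1-\veps$ and exploits the full continuity of $q\mapsto\dsuppiq(E)$ for this fixed $E$, together with mere monotonicity of $\taupilocmax$. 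Your treatment of the first two equalities via (a) is correct (and in fact slightly cleaner than the paper's, since it shows $\mathbb Q$ alone suffices there); the difficulty is entirely in the $\sup$ layer present in $\taupilocmax$ and $\dsupbigpiq$.
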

\begin{proof}
Let $\mathbf Q_1$ (resp. $\mathbf Q_2$) be the set of points of discontinuity of $\taupiloc$ (resp. of $\taupilocmax$). $\mathbf Q_1$ and $\mathbf Q_2$ are at most countable.
Set $\mathbf Q=\mathbf Q_1\cup\mathbf Q_2\cup\mathbb Q$.

\smallskip

The first equality is already contained in \cite[Prop. 4.3]{OL11}. Regarding the second one, let $\mu\in\pk$ be such that $\taupiloc(q)\geq\dsupsmallpiq(\mu)$ for any $q\in\mathbf Q$, 
and let us fix $q\in\mathbb R\backslash\mathbf Q$. Let $(q_n)$ be a sequence of $\mathbb Q$ increasing to $q$. For each $n$, we may find $E_n$ with
$\mu(E_n)>0$ and $\overline{\dim}_{\pi,\rm B}^{q_n}(E_n)\leq \taupiloc(q_n)+\frac1n$. For $n$ large enough, we get, by continuity of $\taupiloc$ at $q$,
$$\dsuppiq(E_n)\leq \overline{\dim}_{\pi,\rm B}^{q_n}(E_n)\leq \taupiloc(q)+\delta$$ for any fixed $\delta>0$, so that
$\dsupsmallpiq(\mu)\leq \taupiloc(q).$

The proof of the third inequality goes along the same lines and is left to the reader. Regarding the last one, let $\mu\in\pk$ be such that 
$\taupilocmax(q)\geq\dsupbigpiq(\mu)$ for any $q\in\mathbf Q$ and let us fix $q\in\mathbb R$. Let $\veps>0$, $\delta>0$ and let $(q_n)\subset\mathbf Q$ 
be a sequence decreasing to $q$. Let also $(\veps_n)\subset(0,+\infty)$ be such that $\sum_n\veps_n<\veps$. For each $n$, we may find $E_n\subset K$
such that $\mu(E_n)>1-\veps_n$ and $\overline{\dim}_{\pi,\rm B}^{q_n}(E_n)\geq\taupilocmax(q_n)+\delta$. Set $E=\bigcap_n E_n$ so that
$\mu(E)>1-\veps$ and observe that, by continuity of $\dsuppiq(E)$, 
$$\dsuppiq(E)\leq \liminf_n\taupilocmax(q_n)+\delta\leq\taupilocmax(q)+\delta.$$
\end{proof}
We conclude this section by pointing out that, working with doubling measures, we can also add a dilation factor when studying the multifractal dimensions.
\begin{lemma}
Let $\pi$ be a doubling Borel probability measure on $\mathbb R^d$ with compact support $K$. Let $c>0$, $E\subset K$ and $q\in\mathbb R$.
Then 
$$\dsuppiq(E)=\limsup_{r\to 0}\frac{\log \sup_{(B(x_i,r))\textrm{ \rm is a packing of }E}\sum_i \pi\big(B(x_i,cr)\big)^q}{-\log r}.$$
\end{lemma}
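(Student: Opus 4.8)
The plan is to show the two inequalities between the quantity on the right-hand side, call it $D_c(E)$, and $\dsuppiq(E)$, by exploiting the doubling hypothesis to compare the weights $\pi(B(x_i,cr))$ with $\pi(B(x_i,r))$ up to a multiplicative constant that is harmless after taking logarithms and dividing by $-\log r$. The main point is purely that a doubling measure satisfies $\pi(B(x,tr))\leq C(t)\,\pi(B(x,r))$ for every $t\geq 1$, with $C(t)$ depending only on $t$ (and on the doubling constant), and also a lower bound $\pi(B(x,tr))\geq c(t)\pi(B(x,r))$ for $t\leq 1$; this is standard and follows by iterating the defining inequality $\log_2\lceil t\rceil$ times.

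First I would treat the case $c\geq 1$. If $(B(x_i,r))$ is a centred packing of $E$, then $\pi(B(x_i,cr))^q$ is comparable to $\pi(B(x_i,r))^q$ up to the factor $C(c)^{|q|}$, uniformly in $i$; summing, the $cr$-weighted packing sum over the packing $(B(x_i,r))$ lies between $C(c)^{-|q|}$ and $C(c)^{|q|}$ times $\ppiq(E,r)$. Taking logarithms, dividing by $-\log r$ and letting $r\to 0$, the constant disappears and we get $D_c(E)=\limsup_{r\to 0}\frac{\log\ppiq(E,r)}{-\log r}$, which equals $\dsuppiq(E)$ by the packing characterization of $\dsuppiq$ stated in the lemma above (valid because $\pi$ is doubling). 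For $c<1$ the argument is the same, using the lower doubling bound $c(c)\pi(B(x_i,r))\leq\pi(B(x_i,cr))$ in place of the upper one; note that one must be slightly careful with the sign of $q$, but taking absolute values as above absorbs both cases at once.

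One subtlety worth spelling out is that on the right-hand side the supremum runs over packings of $E$ by balls of radius $r$ (the separation condition $|x_i-x_j|>2$ in the definition of centred packing is, as written in the paper, independent of $r$ — but in the standard convention one reads it as $|x_i-x_j|>2r$, and the comparison above works verbatim in either reading since it only compares weights ball-by-ball). Thus no rescaling of the index set of the packing is needed: we literally plug the same packing into both sums and compare termwise. The only genuinely nontrivial input is the two-sided doubling estimate for $\pi(B(x,cr))$ versus $\pi(B(x,r))$, and that is classical; I expect the main obstacle to be merely bookkeeping the dependence of the constant on $q$ and on $c$ and checking that it does not interact with the $\limsup$, which it does not because it is a fixed constant while $-\log r\to+\infty$. $\blacksquare$
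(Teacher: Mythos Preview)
Your argument is correct and is exactly the intended one. The paper states this lemma without proof, presenting it as a direct consequence of the doubling hypothesis; your termwise comparison $\pi(B(x_i,cr))\asymp_{c}\pi(B(x_i,r))$, summed over a fixed packing and then fed into the $\limsup$, is precisely the computation one is meant to supply. There is nothing to add beyond noting, as you do, that the constant depends only on $c$, $q$ and the doubling constant, and hence vanishes against $-\log r$.
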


\subsection{The lower bounds}
In this subsection, we fix $q\in\mathbb R$. We shall prove, in the same time, that quasi-all measures $\mu\in\pk$ satisfy
\begin{description}
\item[(A)] $\dsupsmallpiq(\mu)\geq \taupiloc(q)$;
\item[(B)] $\dsupbigpiq(\mu)\geq \taupilocmax(q)$
\end{description}
(we shall prove $\bf(A)$ since we want to dispense with the assumption ``$K$ has no isolated points"). 

If we want to prove
$\bf (A)$, we consider $t<\taupiloc(q)$ and we set $F=G=K$. If we want to prove $\bf (B)$, then we consider $t<\taupilocmax(q)$,
a pair $(y,\kappa)\in K\cap (0,+\infty)$ such that $\dsuppilocq\big(B(y,\kappa)\cap K\big)>t$, and we set 
$F=K\cap B(y,\kappa)$, $G=K\cap B(y,\kappa/2)$.

Let now $x\in K$ and $s>0$.
\begin{itemize}
\item if $x\notin F$, then we set $\mu_{x,s}=\delta_x$ and $r_{x,s}=s$;
\item if $x\in F$, then $\dsuppiq\big(B(x,s)\cap F\big)>t$, so that we may choose $r_{x,s}\in(0,s)$ satisfying
$$t<\frac{\log \ppiq\big(B(x,s)\cap F,r_{x,s}\big)}{-\log r_{x,s}}.$$
Thus, there exists a finite set $\Lambda_{x,s}\subset B(x,s)\cap F$ which consists in points at distance at least $2r_{x,s}$ and satisfying
$$\sum_{z\in\Lambda_{x,s}}\pi\big(B(z,r_{x,s})\big)^q\geq r_{x,s}^{-t}.$$
We then set
$$\mu_{x,s}=\frac1{\sum_{z\in\Lambda_{x,s}}\pi\big(B(z,r_{x,s})\big)^q}\sum_{z\in\Lambda_{x,s}}\pi\big(B(z,r_{x,s})\big)^q\delta_z.$$
\end{itemize}
Observe that, in both cases, $\supp(\mu_{x,s})\subset B(x,s)$.

\medskip

Let us introduce some notations. We denote by $\mathcal F$ the set of nonempty finite subsets of $K$. For $A\in \mathcal F$, we denote by 
$$\mathcal Q(A)=\left\{(p_x)_{x\in A};\ p_x\in(0,1),\ \sum_{x\in A}p_x=1\right\}.$$
Next, for $A\in\mathcal F$ and $\mathbf p=(p_x)_{x\in A}\in\mathcal Q(A)$, we denote 
\begin{eqnarray*}
\mu_{A,\mathbf p,s}&=&\sum_{x\in A}p_x\mu_{x,s}\\
r_{A,s}&=&\inf_{x\in A}r_{x,s}\in(0,s).
\end{eqnarray*}

An application of Lemma \ref{LEMDENS1} shows that, for any sequence $(\eta_n)$ decreasing to zero and for any $m\geq 1$,
$$\bigcup_{n\geq m}\bigcup_{A\in\mathcal F}\bigcup_{\mathbf p\in\mathcal Q(A)}\big\{\mu_{A,\mathbf p,\eta_n}\big\}$$
is dense in $\pk$. Finally, for any $A\in\mathcal F$, any $\mathbf p\in\mathcal Q(A)$, any $s>0$ and any $\veps>0$, we consider a real number
$\eta_{A,\mathbf p,s,\veps}>0$ such that any $\mu\in\pk$ satisfying $L(\mu,\mu_{A,\mathbf p,s})<\eta_{A,\mathbf p,s,\veps}$ also verifies, 
for any $E\subset K$, 
$$\mu_{A,\mathbf p,s}\big(E(r_{A,s}/2)\big)\geq\mu(E)-\veps.$$
We now set
\begin{eqnarray*}
\mathcal R&=&\bigcap_{m\geq 1}\bigcup_{n\geq m}\bigcup_{A\in\mathcal F}\bigcup_{\mathbf p\in\mathcal Q(A)}B_L\big(\mu_{A,\mathbf p,1/n},\eta_{A,\mathbf p,1/n,1/n}\big)\\
&&\quad\bigcap \big\{\mu\in\pk;\ \mu(G)>0\big\}.
\end{eqnarray*}
$\mathcal R$ is a dense $G_\delta$-subset of $\pk$ and we pick $\mu\in\mathcal R$. We shall prove that either
\begin{description}
\item[(A)] $\dsupsmallpiq(\mu)\geq t$
\end{description}
or
\begin{description}
\item[(B)] $\dsupbigpiq(\mu)\geq t$.
\end{description}
In case $\bf(A)$, let $E\subset K$ with $\mu(E)>0$ and let $E'= E$. In case $\bf (B)$, we begin by fixing $\veps>0$ such that any subset $E$ of $K$ satisfying
$\mu(E)\geq1-\veps$ also satisfies $\mu(E\cap G)>0$. Then, let $E\subset K$ with $\mu(E)\geq 1-\veps$ and let us define $E'=E\cap G$. In both cases, we
are going to show that
$\dsuppiq(E')\geq t.$

\medskip

Since $\mu\in\mathcal R$ we may find sequences $(A_n)\subset\mathcal F$, $(\mathbf p_n)$ with $\mathbf p_n\in\mathcal Q(A_n)$, and $(s_n)$ going to zero
such that 
$$\mu\in B_L\big(\mu_{A_n,\mathbf p_n,s_n},\eta_{A_n,\mathbf p_n,s_n,s_n}\big).$$
For commodity reasons, we set $r_n=r_{A_n,s_n}$, $\eta_n=\eta_{A_n,\mathbf p_n,s_n,s_n}$ and $E'_n=E'(r_n/2)$. Our assumption on $\eta_n$ ensures that
$$\mu_{A_n,\mathbf p_n,s_n}(E'_n)\geq \mu(E')-s_n\geq \frac12\mu(E')$$
provided $n$ is large enough. By construction of $\mu_{A_n,\mathbf p_n,s_n}$, we may find $x_n\in A_n$ such that $\mu_{x_n,s_n}(E'_n)\geq\frac 12\mu(E').$
Moreover, $x_n$ also belongs to $F$. This is clear in case {\bf (A)} and in case {\bf (B)}, this follows from
$$\mu_{x_n,s_n}(E'_n)\leq \delta_{x_n}\big(G(\kappa/2)\big)=\delta_{x_n}(F)=0,$$
provided $x_n\notin F$ and $n$ is large enough so that $E'_n\subset G(\kappa/2)$. Hence, by definition of $\mu_{x_n,s_n}$ when $x_n\in F$, we obtain
\begin{eqnarray*}
\sum_{z\in\Lambda_{x_n,s_n}\cap E'_n}\pi\big(B(z,r_{x_n,s_n})\big)^q&\geq&\frac 12\mu(E')\left(\sum_{z\in \Lambda_{x_n,s_n}}\pi\big(B(z,r_{x_n,s_n})\big)^q\right)\\
&\geq&\frac 12\mu(E')r_{x_n,s_n}^{-t}.
\end{eqnarray*}

Now for any $z\in\Lambda_{x_n,s_n}\cap E'_n$, there exists $x_z\in E$ with $\|x_z-z\|\leq\frac12r_n\leq r_{x_n,s_n}$.
It is then not hard to show that $\big(B(x_z,r_{x_n,s_n}/2)\big)_{z\in\Lambda_{x_n,s_n}\cap E'_n}$ is a centred packing of $E$.
Indeed, for $u\neq v$ in $\Lambda_{x_n,s_n}$, 
\begin{eqnarray*}
\|x_u-x_v\|&\geq&\|u-v\|-\|u-x_u\|-\|v-x_v\|\\
&\geq&2r_{x_n,s_n}-\frac{r_{x_n,s_n}}2-\frac{r_{x_n,s_n}}2={r_{x_n,s_n}}.
\end{eqnarray*}

We also observe that, for any $z\in\Lambda_{x_n,s_n}\cap E'_n$, 
$$B(x_z,r_{x_,s_n}/2)\subset B(z,r_{x_n,s_n})\subset B(x_z,2r_{x_n,s_n}).$$
Summarizing what we have done, this means that we have found a packing $\big(B(u,r)\big)_{u\in\Lambda}$ of $E'$ with $r$ as small as we want, and a constant
$c_0\in\mathbb R$ ($c_0=2$ if $q\geq 0$, $c_0=1/2$ if $q\leq 0$) so that
$$\sum_{u\in\Lambda}\pi\big(B(u,c_0r)\big)^q\geq\frac12\mu(E')r^{-t}.$$
This yields $\dsuppiq(E')\geq t$ and this concludes this part of the proof.

\subsection{The upper bounds}

We now turn to the proof of the upper bounds in Theorem \ref{THMMAINBIG} which are simpler. As before, we fix $q\in\mathbb R$. We first show that a generic
$\mu\in\pk$ satisfies 
$$\dsupsmallpiq(\mu)\leq \taupiloc(q).$$
Indeed, let $t>\taupiloc(q)$. There exists $x_t\in K$ and $r_t>0$ such that
$$\dsuppiq\big(B(x_t,r_t)\big)\leq t.$$
We set $\mathcal U_t=\big\{\mu\in\pk;\ \mu\big(B(x_t,r_t)\big)>0\big\}$. $\mathcal U_t$ is dense and open. Moreover, any $\mu\in\mathcal U_t$ satisfies
$\dsupsmallpiq(\mu)\leq t$. The residual set we are looking for is thus given by
$$\mathcal R=\bigcap_{t\in\mathbb Q,t>\taupiloc(q)}\mathcal U_t.$$
We now show that a generic $\mu\in\pk$ satisfies
$$\dsupbigpiq(\mu)\leq \taupilocmax(q).$$
As before, let $t>\taupilocmax(q)$. We just need to prove that a generic $\mu\in\pk$ satisfies $\dsupbigpiq(\mu)\leq t$.
 Let $(y_n)$ be a dense sequence of distinct points in $K$, and let $(\kappa_n)$ be a sequence decreasing to zero. For each $n$, we may find $x_n\in B(y_n,\kappa_n)$ and 
$r_n>0$ such that $\dsuppiq\big(B(x_n,r_n)\big)\leq t$. We may assume that the sequence $(r_n)$ is going to zero.

 We set 
$$\Lambda_n=\left\{\sum_{i=1}^n p_i\delta_{x_i};\ \sum_{i=1}^n p_i=1, p_i>0\right\},$$
so that, by Lemma \ref{LEMDENS1}, $\bigcup_{n\geq m}\Lambda_n$ is dense for any integer $m\geq 1$. Moreover, Lemma \ref{LEMTOPO1} tells us that, 
for any $m\geq 1$, one may find $\eta_m>0$ such that, for any $\mu\in \Lambda_n$, for any $\nu\in\pk$ with $L(\mu,\nu)<\eta_m,$
$$\nu\left(\bigcup_{i=1}^n B(x_i,r_n)\right)\geq \mu\left(\bigcup_{i=1}^m B(x_i,r_n/2)\right)-\frac 1m\geq 1-\frac 1m.$$
We then set
$$\mathcal R=\bigcap_{m\geq 1}\bigcup_{n\geq m}\bigcup_{\mu\in \Lambda_n}B_L(\mu,\eta_m).$$
$\mathcal R$ is a dense $G_\delta$-set. Pick $\nu\in\mathcal R$ and $\veps>0$. Let also $m\geq 1$ with $\frac 1m\leq\veps$. We may find $n\geq m$ and $\mu\in E_n$ such that $L(\mu,\nu)<\eta_m$. Thus, setting $E=\bigcup_{i=1}^n B(x_i,r_n)$, we get
$$\left\{
\begin{array}{rcl}
\nu(E)&\geq &1-\frac 1m\geq 1-\veps\\
\dsuppiq(E)&\leq& t.
\end{array}\right.$$
Therefore, $\dsupbigpiq(\nu)\leq t$.

\section{The typical lower multifractal box dimensions}
This section is devoted to the proof of Theorem \ref{THMMAINSMALL}. We begin with a lemma which helps us to avoid the assumption
"$\pi$ is a doubling measure" throughout the proofs.
\begin{lemma}\label{LEMNODOUBLING}
Let $\pi$ be a Borel probability measure with compact support $K$. Then 
$$\Dsuppiunif(-\infty)=\inf_N\inf_{\substack{y_1,\dots,y_N\in K\\\rho>0}}\limsup_{r\to 0}\inf_{i=1,\dots,N}\frac{\log\big(\inf_{B(x,r)\cap B(y_i,\rho)\neq\varnothing}\pi(B(x,r))\big)}{\log r}$$
$$\Dsuppimax(-\infty)=\sup_{\substack{y\in K\\ \rho>0}}\limsup_{r\to 0}\frac{\log \inf_{B(x,r)\cap B(y,\rho)\neq\varnothing}\pi\big(B(x,r)\big)}{\log r}.$$
\end{lemma}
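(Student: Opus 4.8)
The statement to prove is Lemma \ref{LEMNODOUBLING}, which asserts two identities:
$$\Dsuppiunif(-\infty)=\inf_N\inf_{\substack{y_1,\dots,y_N\in K\\\rho>0}}\limsup_{r\to 0}\inf_{i=1,\dots,N}\frac{\log\big(\inf_{B(x,r)\cap B(y_i,\rho)\neq\varnothing}\pi(B(x,r))\big)}{\log r}$$
and similarly for $\Dsuppimax(-\infty)$. The difference between the two sides is only whether, inside the innermost infimum, we range over $x\in B(y_i,\rho)$ (the original definition) or over all $x$ such that $B(x,r)\cap B(y_i,\rho)\neq\varnothing$ (the reformulation). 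Let me sketch a proof.

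\medskip

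The plan is to prove both identities simultaneously, since the argument is identical: one just needs to compare, for a fixed center $y$ and radius $\rho$, the quantity $A_\rho(r):=\inf_{x\in B(y,\rho)}\pi(B(x,r))$ with $\widetilde A_\rho(r):=\inf_{B(x,r)\cap B(y,\rho)\neq\varnothing}\pi(B(x,r))$. Since $\{x\in B(y,\rho)\}\subset\{x:B(x,r)\cap B(y,\rho)\neq\varnothing\}$, we trivially have $\widetilde A_\rho(r)\leq A_\rho(r)$, so $\frac{\log\widetilde A_\rho(r)}{\log r}\leq\frac{\log A_\rho(r)}{\log r}$ (dividing by $\log r<0$ reverses the inequality), hence the "tilde" (reformulated) quantity gives a value that is $\leq$ the original one on each side; passing through the $\limsup$, $\inf$ and $\sup$ operations preserves the corresponding inequalities, giving one direction. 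For the reverse direction, the key observation is a simple geometric nesting: if $B(x,r)\cap B(y,\rho)\neq\varnothing$, then $B(x,r)\subset B(x',2r)$ for any $x'\in B(y,\rho)$ chosen within distance $2r$ of $x$; more usefully, if $B(x,r)\cap B(y,\rho)\neq\varnothing$ then there is a point $x'\in B(y,\rho)$ (e.g. on the segment towards $y$, or just any point of the nonempty intersection lying in $B(y,\rho)$) with $\|x-x'\|<2r$, so that $B(x,r)\subset B(x',3r)$, hence $\widetilde A_\rho(r)\geq \inf_{x'\in B(y,\rho)}\pi(B(x',3r))=A_\rho(3r)$. Taking logarithms and dividing by $\log r$ gives $\frac{\log\widetilde A_\rho(r)}{\log r}\geq\frac{\log A_\rho(3r)}{\log r}=\frac{\log A_\rho(3r)}{\log(3r)}\cdot\frac{\log(3r)}{\log r}$, and since $\frac{\log(3r)}{\log r}\to 1$ as $r\to0$, the $\limsup$ as $r\to 0$ of the left side is $\geq$ the $\limsup$ of $\frac{\log A_\rho(3r)}{\log(3r)}$, which equals $\limsup_{r\to0}\frac{\log A_\rho(r)}{\log r}$.

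\medskip

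Assembling these two directions pointwise in $(N,y_1,\dots,y_N,\rho)$ (resp. in $(y,\rho)$) shows that, for each fixed choice of these parameters, the inner $\limsup$-quantity is the same whether computed with $A$ or $\widetilde A$; therefore the outer operations $\inf_N\inf_{y_i,\rho}$ (resp. $\sup_{y,\rho}$) produce the same value, which is exactly the asserted identity. One should handle the harmless edge case where $\pi(B(x,r))=0$ for some relevant ball, in which case the logarithm is $-\infty$ and the corresponding ratio is $+\infty$ (recall $\log r<0$); the inequalities above remain valid in the extended reals, since $\widetilde A_\rho(r)=0$ forces $A_\rho(3r)=0$ only in the favourable direction and $\widetilde A_\rho(r)\le A_\rho(r)$ always, so no difficulty arises.

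\medskip

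I do not expect a genuine obstacle here: the lemma is essentially a bookkeeping statement saying that enlarging the radius from $r$ to $3r$ in a ball does not affect a scaling exponent, combined with the monotonicity of all the operations involved. The only point requiring a little care is checking that the chain of $\limsup$/$\inf$/$\sup$ manipulations is valid in $[-\infty,+\infty]$ and that the factor $\log(3r)/\log r\to1$ is inserted at the right place so that it does not interfere with the $\limsup$; this is routine. The role of the lemma in the paper is to let subsequent arguments replace the awkward condition $x\in B(y,\rho)$ by the more robust condition $B(x,r)\cap B(y,\rho)\neq\varnothing$, which is stable under the covering/packing manipulations used later, thereby dispensing with the doubling hypothesis on $\pi$ at that stage.
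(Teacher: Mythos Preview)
Your proof contains a genuine error in the ``reverse direction''. You claim that from $B(x,r)\subset B(x',3r)$ one obtains $\widetilde A_\rho(r)\geq A_\rho(3r)$. But the inclusion $B(x,r)\subset B(x',3r)$ only gives $\pi(B(x,r))\leq \pi(B(x',3r))$, an \emph{upper} bound on $\pi(B(x,r))$, not a lower one. Since $x'$ depends on $x$, all you can conclude is $\widetilde A_\rho(r)\leq \sup_{x'\in B(y,\rho)}\pi(B(x',3r))$, which is useless. In fact the inequality $\widetilde A_\rho(r)\geq A_\rho(3r)$ is false in general: take $\pi$ the Lebesgue measure on $K=[0,1]$, $y=1/2$, $\rho=1/4$, and $r$ small; then $\widetilde A_\rho(r)=A_\rho(r)=2r$ while $A_\rho(3r)=6r$. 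You also mishandle the sign when dividing by $\log r<0$: from $\widetilde A_\rho(r)\leq A_\rho(r)$ you should get $\frac{\log\widetilde A_\rho(r)}{\log r}\geq\frac{\log A_\rho(r)}{\log r}$, not $\leq$. So your labelling of which direction is trivial is reversed.

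The paper's argument exploits the outer infimum over $\rho$ rather than trying to change $r$. If $B(x,r)\cap B(y,\rho/2)\neq\varnothing$ and $r<\rho/2$, then $\|x-y\|<\rho/2+r<\rho$, so $x\in B(y,\rho)$; hence the set $\{x:B(x,r)\cap B(y,\rho/2)\neq\varnothing\}$ is contained in $B(y,\rho)$, which gives $\widetilde A_{\rho/2}(r)\geq A_\rho(r)$ for all small $r$. Dividing by $\log r$ and taking $\limsup$ yields that the tilde quantity at radius $\rho/2$ is at most the original quantity at radius $\rho$; since the outer infimum ranges over all $\rho>0$, this proves the non-trivial inequality. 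The point is that you must vary $\rho$, not $r$: enlarging the ball $B(x,r)$ can only increase its $\pi$-measure, which goes the wrong way for a lower bound on $\widetilde A_\rho(r)$.
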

\begin{proof}
Let $t>\Dsuppiunif(-\infty)$. One may find $y_1,\dots,y_N\in K$, $\rho>0$, $\alpha>0$ such that, for any $r\in(0,\alpha)$,
there exists $i\in\{1,\dots,N\}$ such that any $x\in B(y_i,\rho)$ satisfies
\begin{eqnarray}
\frac{\log\big(\pi(B(x,r))\big)}{\log r}\leq t.\label{EQNODOUBLING}
\end{eqnarray}
We set $\rho_0=\rho/2$ and $\alpha_0=\min(\rho_0,\alpha)$. Let $r\in(0,\alpha_0)$, let $i\in\{1,\dots,N\}$ be as above and let $x\in K$ with 
$B(x,r)\cap B(y_i,\rho_0)\neq\varnothing$. Then $x\in B(y_i,\rho)$ so that (\ref{EQNODOUBLING}) holds true. Thus, since $t>\Dsuppiunif(-\infty)$ is arbitrary,
$$\inf_N\inf_{\substack{y_1,\dots,y_N\in K\\\rho>0}}\limsup_{r\to 0}\inf_{i=1,\dots,N}\frac{\log\big(\inf_{B(x,r)\cap B(y_i,\rho)\neq\varnothing}\pi(B(x,r))\big)}{\log r}\leq
\Dsuppiunif(-\infty).$$
The opposite inequality is trivial, and the proof of the second assertion follows exactly the same lines.
\end{proof}

\subsection{Proof of Theorem \ref{THMMAINSMALL}, Part 1}
In this subsection, we shall prove that a generic measure $\mu\in\pk$ satisfies 
$$\dinfbigpiq(\mu)\geq\begin{cases}
-q\Dsuppiunif(-\infty)&\textrm{provided }q\geq 0\\
-q\Dinfpiunif(+\infty)&\textrm{provided }q\leq 0.
\end{cases}
$$
Firstly, let $t>\Dsuppiunif(-\infty)$ and let us prove that a generic $\mu\in\pk$ satisfies $\dinfbigpiq(\mu)\geq -qt$ for any $q\geq 0$.
Let $N\geq 1$, $y_1,\dots,y_N\in K$ and $\rho>0$ be such that 
$$\limsup_{r\to 0}\inf_{i=1,\dots,N}\frac{\log\big(\inf_{B(x,r)\cap B(y_i,\rho)\neq\varnothing}\pi(B(x,r))\big)}{\log r}<t.$$
We set $\mathcal U=\bigcap_{i=1}^N \big\{\mu\in\pk;\ \mu\big(B(y_i,\rho)\big)>0\big\}.$ $\mathcal U$ is a dense and open subset 
of $\pk$ and let us pick $\mu\in\mathcal U$. There exists $\veps>0$ such that $\mu(E)>1-\veps$ implies
$\mu\big(E\cap B(y_i,\rho)\big)>0$ for any $i=1,\dots,N$. Let now $E\subset K$ with $\mu(E)>1-\veps$ and let $r$
be sufficiently small. There exists $i\in\{1,\dots,N\}$ such that 
$$\frac{\log\big(\inf_{B(x,r)\cap B(y_i,\rho)\neq\varnothing}\pi(B(x,r))\big)}{\log r}<t.$$
Now,
\begin{eqnarray*}
\log\npiq(E,r)&\geq&\log \npiq\big(E\cap B(y_i,\rho),r\big)\\
&\geq&\log\left(\inf_{B(x,r)\cap B(y_i,\rho)\neq\varnothing}\pi\big(B(x,r)\big)^q\right)\\
&\geq&qt\log r.
\end{eqnarray*}
Hence, $\dinfpiq(E)\geq -qt$, which yields $\dinfbigpiq(\mu)\geq -qt$.

\smallskip

The proof for $q<0$ is similar, but now we have to take $t<\Dinfpiunif(+\infty)$. As before, there exist $y_1,\dots,y_N\in K$, $\rho>0$ and $\alpha>0$ such that, 
for any $r\in(0,\alpha)$, there exists $i\in\{1,\dots,N\}$ with
$$\frac{\log\big(\sup_{B(x,r)\cap B(y_i,\rho)\neq\varnothing}\pi(B(x,r))\big)}{\log r}>t.$$
We then carry on mutatis mutandis the same proof, except that now
$$\log\npiq(E,r) \geq q\log\left(\sup_{B(x,r)\cap B(y_i,\rho)\neq\varnothing}\pi\big(B(x,r)\big)\right).$$

\subsection{Proof of Theorem \ref{THMMAINSMALL}, Part 2}
In this subsection, we shall prove that a generic measure $\mu\in\pk$ satisfies 
$$\dinfbigpiq(\mu)\leq\begin{cases}
-q\Dsuppiunif(-\infty)&\textrm{provided }q\geq 0\\
-q\Dinfpiunif(+\infty)&\textrm{provided }q\leq 0.
\end{cases}
$$
We just consider the case $q\geq 0$ and let $t<\Dsuppiunif(-\infty)$. Let also $(y_n)$ be a dense sequence in $K$, let $(\rho_n)$ be a sequence decreasing to zero,
and let $(\veps_n)$ be a sequence of positive real numbers with $\sum_n\veps_n<1$. By assumption, for any $n\geq 1$, 
we may find $r_n\in(0,n^{-n})$ and points $x_1^n,\dots,x_n^n$ with $x_i^n\in B(y_i,\rho_n)$ such that, for any $i=1,\dots,n$,
$$\log\big(\pi(B(x_i^n,r_n))\big)\leq t\log r_n.$$
We set $$\Lambda_n=\left\{\sum_{i=1}^n p_i\delta_{x_i^n};\ \sum_i p_i=1,\ p_i>0\right\}$$
so that $\bigcup_{n\geq m}\Lambda_n$ is dense in $\pk$ for any $m\geq 1$. We also set $E_n=\big\{x_1^n,\dots,x_n^n\}$ so that
$\mu(E_n)=1$ for any $\mu\in\Lambda_n$. Lemma \ref{LEMTOPO1} gives us a real number $\eta_n>0$ such that
$$\forall \mu\in \Lambda_n,\ L(\mu,\nu)<\eta_n\implies \nu\big(E_n(r_n)\big)>1-\veps_n.$$
We let $F_n=E_n(r_n)$ and we consider the dense $G_\delta$-set
$$\mathcal R=\bigcap_{m\geq 1}\bigcup_{n\geq m}\bigcup_{\mu\in\Lambda_n}B_L(\mu,\eta_n).$$
Pick $\nu\in\mathcal R$. There exists a sequence $(n_k)$ going to $+\infty$ and a sequence $(\mu_{n_k})$ with 
$L(\nu,\mu_{n_k})<\eta_{n_k}$ for any $k$. Hence, $\nu\big(F_{n_k}\big)>1-\veps_{n_k}.$ We define
$G_l=\bigcap_{k\geq l}F_{n_k}$
so that $\nu(G_l)\to 1$ as $l\to+\infty$. On the other hand, for any $k\geq l$, $$G_l\subset F_{n_k}\subset \bigcup_{i=1}^{n_k}B(x_i^{n_k},r_{n_k}).$$
Using this covering of $G_l$, we get
\begin{eqnarray*}
\log\npiq(G_l,r_{n_k})&\leq&\sum_{i=1}^{n_k}\pi\big(B(x_i^{n_k},r_{n_k})\big)^q\\
&\leq&n_k r_{n_k}^{qt}.
\end{eqnarray*}
Taking the logarithm and then the liminf, this yields 
$$\dinfpiq(G_l)\leq -tq.$$
Since $\nu(G_l)$ can be arbitrarily close to 1, this implies $\dinfbigpiq(\nu)\leq -qt.$

\subsection{Proof of Theorem \ref{THMMAINSMALLBIS}, Part 1}
We turn to the study of the small lower multifractal  dimensions of a generic measure. More specifically, in this subsection,
we prove that a generic $\mu\in\pk$ satisfies $\dinfsmallpiq(\mu)\geq -q\Dsuppimax(-\infty)$ for any $q\geq 0$. 
Hence, let $t>\Dsuppimax(-\infty)$. Let $(y_n)_n$ be a dense sequence in $K$ and let $(\rho_n)_n$ be a sequence of positive
real numbers decreasing to zero. Let us fix $n\geq 1$. One may find $\alpha_n>0$ such that, for any $r\in(0,\alpha_n)$,
for any $k\in\{1,\dots,n\}$, for any $x\in K$ such that $B(x,r)\cap B(y_k,\rho_n)\neq\varnothing$, 
$$\log\pi\big(B(x,r)\big)\geq t\log r.$$
We then set 
\begin{eqnarray*}
\Lambda_n&=&\left\{\sum_{i=1}^n p_i\delta_{y_i};\ p_i>0,\ \sum_i p_i=1\right\}\\
F_n&=&\{y_1,\dots,y_n\}.
\end{eqnarray*}
Any $\mu\in\Lambda_n$ satisfies $\mu(F_n)=1$. Hence, we may find $\eta_n>0$ such that $\nu\big(F_n(\rho_n)\big)>1-1/n$ provided $L(\mu,\nu)<\eta_n$. We finally
consider
$$\mathcal R=\bigcap_{m\geq 1}\bigcup_{n\geq m}\bigcup_{\mu\in\Lambda_n}B_L(\mu,\eta_n).$$
Pick $\nu$ in the dense $G_\delta$-set $\mathcal R$ and let $E\subset K$ with $\nu(E)>0$. We may find $n$ as large as we want such that
$\nu\big(E\cap F_n(\rho_n)\big)>0$. Now, for any $r\in(0,\alpha_n)$, 
\begin{eqnarray*}
 \log \npiq(E,r)&\geq&\log\npiq\big(E\cap F_n(\rho_n),r)\\
&\geq&\log\left(\inf_{B(x,r)\cap F_n(\rho_n)\neq\varnothing}\pi\big(B(x,r)\big)^q\right)\\
&\geq&qt\log r.
\end{eqnarray*}
Hence, $\dinfsmallpiq(\nu)\geq -qt$.

\subsection{Proof of Theorem \ref{THMMAINSMALLBIS}, Part 2}
We conclude the proof of Theorem \ref{THMMAINSMALLBIS} by showing that a generic $\mu\in\pk$ satisfies $\dinfsmallpiq(\mu)\leq -q\Dsuppiunifmax(-\infty)$
for any $q\geq 0$. We begin by fixing $t<\Dsuppiunifmax(-\infty).$ There exists $z\in K$ and $\kappa>0$ such that
$$t<\inf_{\substack{y_1,\dots,y_N\in B(z,\kappa)\\\rho>0}}\limsup_{r\to 0}\inf_{i=1,\dots,N}\frac{\log\big(\inf_{x\in B(y_i,\rho)}\pi(B(x,r))\big)}{\log r}.$$
 The proof now follows that of Part 2 of Theorem \ref{THMMAINSMALL}, except that we "localize" it in $K\cap B(z,\kappa)$. Specifically,
we now consider $(y_n)$ a dense sequence in $K\cap B(z,\kappa)$. We construct the sequence $(\rho_n)$, $(\veps_n)$, 
$(r_n)$ and $(x_n^i)$ as above, but starting from this sequence $(y_n)$ and from the property 
$$\forall n\geq 1,\ \limsup_{r\to 0}\inf_{i=1,\dots,n}\frac{\log\big(\inf_{x\in B(y_i,\rho_n)}\pi(B(x,r))\big)}{\log r}\geq t.$$
We also ask that for any $n\geq 1$ and any $i\in\{1,\dots,n\}$, $B(x_i^n,r_n)$ is contained in $B(z,\kappa)$. Next, for any $n\geq 1$, we now set
\begin{eqnarray*}
\Lambda_n&=&\Bigg\{\lambda\sum_{i=1}^n p_i\delta_{x_i^n}+(1-\lambda)\theta;\ \lambda,p_i\in(0,1),\ \sum_ip_i=1, \theta\in\pk,\\
&&\quad\quad\ \supp(\theta)\cap B(z,\kappa+2r_n)=\varnothing\Bigg\}\\
E_n&=&\left\{x_1^n,\dots,x_n^n\right\}\\
F_n&=&E_n(r_n).
\end{eqnarray*}
It is not hard to show that, for any $m\geq 1$, $\bigcup_{n\geq m}\Lambda_n$ keeps dense in $\pk$. Moreover, for any $\mu\in \Lambda_n$, we may
find $\eta_{n,\mu}>0$ such that
$$L(\nu,\mu)<\eta_{n,\mu}\implies
\left\{\begin{array}{rcl}
\nu(F_n)&\geq&\lambda(1-\veps_n)\\
\nu\big(B(z,\kappa)\big)&\leq&\lambda(1-\veps_n)^{-1}.
\end{array}\right.$$
Let $\mathcal R$ be the dense $G_\delta$-subset of $\pk$ defined by
$$\mathcal R=\bigcap_{m\geq 1}\bigcup_{n\geq m}\bigcup_{\mu\in\Lambda_n}B_L(\mu,\delta_{n,\mu})\cap\big\{\nu\in\pk;\ \nu\big(B(z,\kappa)\big)>0\big\}.$$
Let $\nu\in\mathcal R$ and let $(n_k)$ be a sequence growing to $+\infty$ such that 
$$\nu(F_{n_k})\geq (1-\veps_{n_k})^2\nu\big(B(z,\kappa)\big)$$
for any $k\geq 1$. We finally define $G=\bigcap_n F_{n_k}$. Since any $F_n$ is contained in $B(z,\kappa)$, the previous inequality
ensures that $\nu(G)>0$ provided $(\veps_n)$ goes sufficiently fast to 0. On the other hand, for any $k\geq 1$, 
$$G\subset F_{n_k}\subset \bigcup_{i=1}^{n_k}B(x_i^{n_k},r_{n_k}).$$
This yields (see Part 2 of Theorem \ref{THMMAINSMALL})
$$\npiq(G,r_{n_k})\leq n_k r_{n_k}^{qt}$$
so that $\dinfsmallpiq(\nu)\leq -qt$.

\subsection{Application to self-similar sets}

We now show how to apply Theorems \ref{THMMAINSMALL} and \ref{THMMAINSMALLBIS} to self-similar compact sets. Let $M\geq 2$, let $S_1,\dots,S_M:\mathbb R^d\to\mathbb R^d$
be contracting similarities with respective ratio $r_1,\dots,r_M\in(0,1)$. Let $(p_1,\dots,p_M)$ be a probability vector. Let $K$ be the nonempty compact
subset of $\mathbb R^d$ and let $\pi$ be the probability measure in $\pk$ satisfying
\begin{eqnarray*}
K&=&\bigcup_{m=1}^M S_i(K)\\
\pi&=&\sum_{m=1}^M p_i \pi\circ S_m^{-1}.
\end{eqnarray*}
We just need to prove the following proposition.
\begin{proposition}
Let $K$ and $\pi$ be as above and assume that the Open Set Condition is satisfied. Define 
$$s_{\min}=\min_m \frac{\log p_m}{\log r_m}\textrm{ and }s_{\max}=\max_m \frac{\log p_m}{\log r_m}.$$
Then 
$$\begin{array}{rcccccl}
\Dsuppiunif(-\infty)&=&\Dsuppiunifmax(-\infty)&=&\Dsuppimax(-\infty)&=&s_{\max}\\
\Dinfpiunif(+\infty)&=&\Dinfpiunifmin(+\infty)&=&\Dinfpimin(-\infty)&=&s_{\min}.
\end{array}$$
\end{proposition}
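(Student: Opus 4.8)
The plan is to reduce the whole proposition to a single local estimate and then read off all six quantities from it. For a word $v=v_1\cdots v_k$ on $\{1,\dots,M\}$ write $r_v=r_{v_1}\cdots r_{v_k}$, $p_v=p_{v_1}\cdots p_{v_k}$ and $S_v=S_{v_1}\circ\cdots\circ S_{v_k}$, and set $s_m=\log p_m/\log r_m$, so that $p_m=r_m^{s_m}$. The estimate to aim for is: for every $y\in K$ and every $\rho>0$, with $\inf$ and $\sup$ taken over $x\in K\cap B(y,\rho)$,
\[
\lim_{r\to 0}\frac{\log\big(\inf_x\pi(B(x,r))\big)}{\log r}= s_{\max},
\qquad
\lim_{r\to 0}\frac{\log\big(\sup_x\pi(B(x,r))\big)}{\log r}= s_{\min}.
\]
Granting this, each of $\Dsuppimax(-\infty)$, $\Dsuppiunif(-\infty)$, $\Dsuppiunifmax(-\infty)$ is built from the first quantity by a finite minimum over finitely many indices, then a $\limsup_{r\to 0}$, then infima and suprema over the points and the radius; a finite minimum of sequences all converging to $s_{\max}$ still converges to $s_{\max}$, so every one of these operations returns $s_{\max}$, and the three quantities all equal $s_{\max}$. (One uses Lemma~\ref{LEMNODOUBLING} to replace, where needed, ``$x\in B(y_i,\rho)$'' by ``$B(x,r)\cap B(y_i,\rho)\ne\varnothing$''; the points $y_i$ may be taken in $K$.) The three $\Dinfpi(+\infty)$-type quantities are treated identically from the second limit, with $\sup$ in place of $\inf$ and $\liminf$ in place of $\limsup$.

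The first ingredient is a universal two-sided estimate. Since each $r_{v_j}\in(0,1)$ and $s_{\min}\le s_{v_j}\le s_{\max}$, multiplying the inequalities $r_{v_j}^{\,s_{\max}}\le r_{v_j}^{\,s_{v_j}}=p_{v_j}\le r_{v_j}^{\,s_{\min}}$ gives $r_v^{\,s_{\max}}\le p_v\le r_v^{\,s_{\min}}$ for every word $v$. Under the Open Set Condition the standard bounded-overlap argument (see \cite{Fal97}) squeezes $\pi(B(x,r))$ between the mass of a single cylinder $S_v(K)\subset B(x,r)$ with $r_v\asymp r$ and the sum of the masses of the boundedly many cylinders of size $\asymp r$ meeting $B(x,r)$; combined with the previous line this yields constants $0<c\le C$, depending only on the list, with
\[
c\,r^{\,s_{\max}}\ \le\ \pi(B(x,r))\ \le\ C\,r^{\,s_{\min}}\qquad\text{for all }x\in K,\ 0<r<1.
\]
Applied to every $x\in K\cap B(y,\rho)$ this already bounds the first (resp. second) quantity above by $s_{\max}$ (resp. below by $s_{\min}$), so only the matching bounds remain.

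For those, fix $m^\ast$ with $s_{m^\ast}=s_{\max}$ and a point $\xi\in U\cap K$ (one may assume $U\cap K\ne\varnothing$, see \cite{Fal97}); let $d=\textrm{dist}(\xi,\partial U)>0$. Given $y\in K$ and $\rho>0$, pick a word $w$ with $y\in S_w(K)$ and $\textrm{diam}\,S_w(K)<\rho$, so $S_w(K)\subset B(y,\rho)$. For $r$ small, take $n$ maximal with $r_w r_{m^\ast}^{\,n}\ge 2r/d$ and put $v=w(m^\ast)^n$, $x=S_v(\xi)\in S_w(K)$; then $r_v\asymp r$ and $\textrm{dist}(x,\partial S_v(U))=r_v d\ge 2r$, so $B(x,r)\subset S_v(U)$. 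Since the Open Set Condition forces $S_v(U)\cap(K\setminus S_v(K))=\varnothing$ and $\pi(A)=p_v\,\pi(S_v^{-1}(A))$ for $A\subset S_v(U)$, we get $\pi(B(x,r))=p_v\,\pi(B(\xi,r/r_v))$; here $r/r_v$ stays in a fixed bounded range, so $\pi(B(\xi,r/r_v))\asymp 1$, while $p_v=p_w r_{m^\ast}^{\,n s_{\max}}=p_w(r_v/r_w)^{s_{\max}}\asymp p_w\,r^{\,s_{\max}}$. Hence $\inf_{x\in K\cap B(y,\rho)}\pi(B(x,r))\le C(y,\rho)\,r^{\,s_{\max}}$ for all small $r$, which with the universal lower bound gives the first limit. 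The second limit follows by the mirror construction with $m_\ast$ attaining $s_{\min}$ and the universal upper bound $\pi(B(x,r))\le Cr^{\,s_{\min}}$.

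The genuinely technical steps are the two facts borrowed from the Open Set Condition: the bounded-overlap squeezing behind the universal two-sided estimate, and the fact that a ball of radius $r$ centred at $S_v(\xi)$ with $\xi$ interior to $U$ stays inside $S_v(U)$, so that the scaling identity for $\pi$ applies without contamination from neighbouring cylinders; I would quote both from \cite{Fal97}. Everything else — the word inequality $r_v^{\,s_{\max}}\le p_v\le r_v^{\,s_{\min}}$, the self-similarity scaling of $\pi$, and the $\inf/\sup/\limsup$ bookkeeping of the first paragraph — is routine. Conceptually, the proposition just records that, because every ball $B(y,\rho)$ with $y\in K$ contains a scaled copy of $K$, all the ``local'' and ``uniform'' variants of $\Dsuppi(-\infty)$ and $\Dinfpi(+\infty)$ collapse to the global values $s_{\max}$ and $s_{\min}$.
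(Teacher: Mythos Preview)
Your proof is correct and rests on the same core construction as the paper: to find, inside any ball $B(y,\rho)$ with $y\in K$, a point obtained by following a cylinder word $w$ into the ball and then iterating the map $S_{m^\ast}$ realizing $s_{\max}$ (resp.\ $S_{m_\ast}$ for $s_{\min}$), so that $\pi(B(x,r))\asymp r^{s_{\max}}$ (resp.\ $r^{s_{\min}}$) along that point.

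The organization differs, however. The paper argues via the chain $\Dsuppimax(-\infty)\ge\Dsuppiunifmax(-\infty)\ge\Dsuppiunif(-\infty)$, proves $\Dsuppiunif(-\infty)\ge s_{\max}$ by the cylinder construction along the discrete scales $r_l^{k+n}$, and then cites \cite{Pat97} for $\Dsuppi(-\infty)\le s_{\max}$, invoking homogeneity to pass to $\Dsuppimax$. You instead establish a single, stronger local statement---that for every $y\in K$ and $\rho>0$ the full limit (not merely the $\limsup$) of $\log(\inf_x\pi(B(x,r)))/\log r$ exists and equals $s_{\max}$---and then observe that all the $\inf/\sup$ and finite-$\min$ operations defining the three quantities preserve this value. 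Your route is a bit more self-contained: the universal two-sided bound $c\,r^{s_{\max}}\le\pi(B(x,r))\le C\,r^{s_{\min}}$ and the Strong Open Set Condition point $\xi\in U\cap K$ replace the citation to \cite{Pat97} and make the ball-in-cylinder step explicit. The paper's route is shorter on the page because it outsources the upper bound, but your packaging has the advantage of yielding the actual limit and treating all six quantities in one stroke.
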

\begin{proof}
We just give the proof of the first inequality. It is straightforward to check that 
$$\Dsuppimax(-\infty)\geq \Dsuppiunifmax(-\infty)\geq\Dsuppiunif(-\infty).$$
Thus we just need to prove that
$$\Dsuppiunif(-\infty)\geq s_{\max}\textrm{ and }\Dsuppimax(-\infty)\leq s_{\max}.$$
Without loss of generality, we may assume that the diameter of $K$ is less than 1.
We shall use standard notations which can be found e.g. in \cite{Fal97}. For a word $\mathbf m=(m_1,\dots,m_n)$
in $\{1,\dots,M\}^n$ of length $n$, let 
\begin{eqnarray*}
S_{\mathbf m}&=&S_{m_1}\circ\dots\circ S_{m_n}\\
p_{\mathbf m}&=&p_{m_1}\times\dots\times p_{m_n}\\
r_{\mathbf m}&=&r_{m_1}\times\dots\times r_{m_n}.
\end{eqnarray*}
If the word $\mathbf m$ is infinite, then $S_{\mathbf m}(K)=\bigcap_{i=1}^{+\infty}S_{m_i}(K)$ is reduced to a single point $x_{\mathbf m}\in K$
and each point of $K$ is uniquely defined by such a word. Let now $y\in K$, $\rho>0$ and let $l$ be such that $\frac{\log p_l}{\log r_l}=s_{\max}$.
There exists a word $\mathbf m=(m_1,\dots,m_n)$ such that $S_{\mathbf m}(K)\subset B(y,\rho)$. We then define
$$\overline{\mathbf m}=(m_1,\dots,m_n,l,\dots)$$
$$\overline{\mathbf m_k}=(m_1,\dots,m_n,l,\dots,l)$$
where $l$ appears $k$ times at the end of $\overline{\mathbf m_k}$. We define $x_y$ as $S_{\overline{\mathbf m}}(K)$. Now, for any $k\geq 1$, there exists $z\in K$ such that 
$x=S_{\overline{\mathbf m_k}}z$, so that $B(x_y,r_{\overline{ \mathbf m_k}})=S_{\overline{\mathbf m_k}}(B(z,1))$. 
Now the definition of $\pi$ and the open set condition ensure that
$$\pi\big(S_{\overline{\mathbf m_k}}(B(z,1))\big)=p_{\overline{\mathbf m_k}}\pi\big(B(z,1)\big)=p_{\overline{\mathbf m_k}}$$
since the diameter of $K$ is less than 1. Thus, for any $k\geq 1$,
\begin{eqnarray*}
\pi\big(B(x_y,r_l^{k+n})\big)&\leq &\pi\big(B(x_y,r_{\overline{ \mathbf m_k}})\big)\\
&\leq&p_{\overline{\mathbf m_k}}=p_{m_1}\dots p_{m_n}p_l^k.
\end{eqnarray*}
Finally, let $N\geq 1$, let $y_1,\dots,y_N\in K$ and let $\rho>0$. To each $y_i$, we can associate a word $\mathbf m^i$ of length $n^i$ and a point $x_i$ as above. 
Let $n=\max(n^i)$. Then for any $i=1,\dots,N$,
$$\frac{\log \pi\big(B(x_i,r_l^{k+n})\big)}{(k+n)\log r_l}\geq \frac C{k+n}+\frac{k}{k+n}s_{\max}$$
where $C$ does not depend on $k$. Letting $k$ to $+\infty$ gives $\Dsuppiunif(-\infty)\geq s_{\max}.$
On the other hand, it is well known that $\Dsuppi(-\infty)\leq s_{\max}$ (see for instance \cite{Pat97}). By the homogeneity
of self-similar sets and self-similar measures, this implies $\Dsuppimax(-\infty)\leq s_{\max}$.
\end{proof}

\bibliographystyle{amsalpha}
\providecommand{\bysame}{\leavevmode\hbox to3em{\hrulefill}\thinspace}
\providecommand{\MR}{\relax\ifhmode\unskip\space\fi MR }
\providecommand{\MRhref}[2]{%
  \href{http://www.ams.org/mathscinet-getitem?mr=#1}{#2}
}
\providecommand{\href}[2]{#2}

\end{document}